\newtheorem{theorem}{Theorem}
\newtheorem{remark}{Remark}
\newtheorem{lemma}{Lemma}
\newtheorem{definition}{Definition}
\newtheorem{assumption}{Assumption}
\newcommand{\ip}[2]{\left\langle #1,#2\right\rangle}
\newcommand{\Var}{\operatorname{Var}}
\DeclareMathOperator{\col}{col}
\DeclareMathOperator{\row}{row}
\newcommand*{\rom}[1]{%
\textup{\uppercase\expandafter{\romannumeral#1}}%
}
\def\eqref#1{equation~\ref{#1}}
\def\Eqref#1{Equation~\ref{#1}}
\def\1{\bm{1}}
\DeclareMathAlphabet{\mathsfit}{\encodingdefault}{\sfdefault}{m}{sl}
\SetMathAlphabet{\mathsfit}{bold}{\encodingdefault}{\sfdefault}{bx}{n}
\DeclareMathOperator*{\argmin}{arg\,min}
\DeclareMathOperator{\sign}{sign}
\newcommand{\cH}{\mathcal{H}}
\newcommand{\cK}{\mathcal{K}}
\newcommand{\cP}{\mathcal{P}}
\newcommand{\cW}{\mathcal{W}}
\newcommand{\cZ}{\mathcal{Z}}
\newcommand{\bE}{\mathbb{E}}
\newcommand{\bP}{\mathbb{P}}
\newcommand{\norm}[1]{\left\lVert#1\right\rVert}
\newcommand{\rank}{\mathrm{rank}}
\title{Certifying optimality in nonconvex robust PCA}
\author{\large Pinxi Gong\thanks{\texttt{gpx2005@connect.hku.hk}, Department of Mathematics, The University of Hong Kong.} \and  Lexiao Lai\thanks{\texttt{lai.lexiao@hku.hk}, Department of Mathematics, The University of Hong Kong.} \and Jianhao Ma\thanks{\texttt{jianhaom@wharton.upenn.edu}, Department of Statistics and Data Science, University of Pennsylvania.}}
\begin{document}
\maketitle

\begin{abstract}
    Robust principal component analysis seeks to recover a low-rank matrix from fully observed data with sparse corruptions. A scalable approach fits a low-rank factorization by minimizing the sum of entrywise absolute residuals, leading to a nonsmooth and nonconvex objective. Under standard incoherence conditions and a random model for the corruption support, we study factorizations of the ground-truth rank-$r$ matrix with both factors of rank $r$. With high probability, every such factorization is a Clarke critical point. We also characterize the local geometry: when the factorization rank equals $r$, these solutions are sharp local minima; when it exceeds $r$, they are strict saddle points.
\end{abstract}

\section{Introduction}
\label{sec:intro}

Robust principal component analysis (PCA) aims to recover a low-rank matrix
$L \in \mathbb{R}^{m \times n}$ from observations
\begin{equation*}
    M = L + S,
\end{equation*}
where $S$ is a sparse matrix modeling outliers.
Unlike classical PCA, which can be severely distorted by a small fraction of corrupted
entries, robust PCA explicitly accounts for such deviations and enables reliable extraction
of low-dimensional structure.
This problem has found widespread applications across many domains, including
computer vision (e.g., background subtraction \cite{lrslibrary2015,bouwmans2019}),
signal processing (e.g., image denoising \cite{xu2017denoising}),
and bioinformatics (e.g., gene expression analysis \cite{liu2013robust}).
Robust PCA can also be viewed as a fully observed special case of robust matrix completion,
where both missing data and sparse corruptions may be present \cite{candes2009,chen2021bridging}.

A classical approach to robust PCA is convex optimization.
The seminal work \cite{candes2011}
introduces \emph{Principal Component Pursuit} (PCP),
\begin{equation*}
\min_{L,S} \ \|L\|_{\star} + \lambda \|S\|_1
\quad \text{s.t.} \quad L + S = M,
\end{equation*}
where $\|A\|_\star$ is the nuclear norm and $\|A\|_1:= \sum_{i = 1}^m \sum_{j = 1}^n |A_{ij}|$ is the vectorized $\ell_1$-norm.
They show that PCP enjoys exact recovery guarantees under incoherence and sparsity conditions.
Algorithmically, PCP can be solved in polynomial time using augmented Lagrangian or ADMM-based methods \cite{LinLiuSu2011}, which iteratively call singular value decompositions (SVDs) as a subroutine. However, repeatedly computing SVDs can become computationally expensive at scale.

Motivated by scalability, there has been substantial interest in \emph{nonconvex} formulations \cite{netrapalli2014altproj,yi2016fast,cai2019accelerated,chen2021bridging},
which represent the low-rank matrix $L$ through low-dimensional factors $XY^{\top}$ to avoid repeated spectral decompositions.
An $\ell_1$-based factorized formulation for robust PCA is
\begin{equation}
\label{eq:obj}
\min_{X \in \mathbb{R}^{m \times k},\, Y \in \mathbb{R}^{n \times k}}
f(X,Y) := \left\| X Y^\top - M \right\|_{1}.
\end{equation}
Although \Eqref{eq:obj} is nonconvex and nonsmooth, it is widely used in practice and often performs
well empirically under sparse corruptions \cite{KeKanade2005,ErikssonVanDenHengel2010,MengXuZhangZhao2013}.

\subsection{Empirical motivation: convergence and scalability}
\label{subsec:empirical}

Before developing a theory, we illustrate two empirical phenomena that motivate our analysis.
First, Figure~\ref{fig:errors} shows that a simple subgradient method applied to minimize the factorized objective (\Eqref{eq:obj}) with small initialization
can drive the reconstruction error rapidly to zero, both in the exactly-parameterized setting ($k=r:=\mathrm{rank}(L)$) and in the
overparameterized setting ($k>r$).
\begin{figure}[ht]
    \centering
    \begin{subfigure}[t]{0.48\textwidth}
        \centering
        \includegraphics[width=\textwidth]{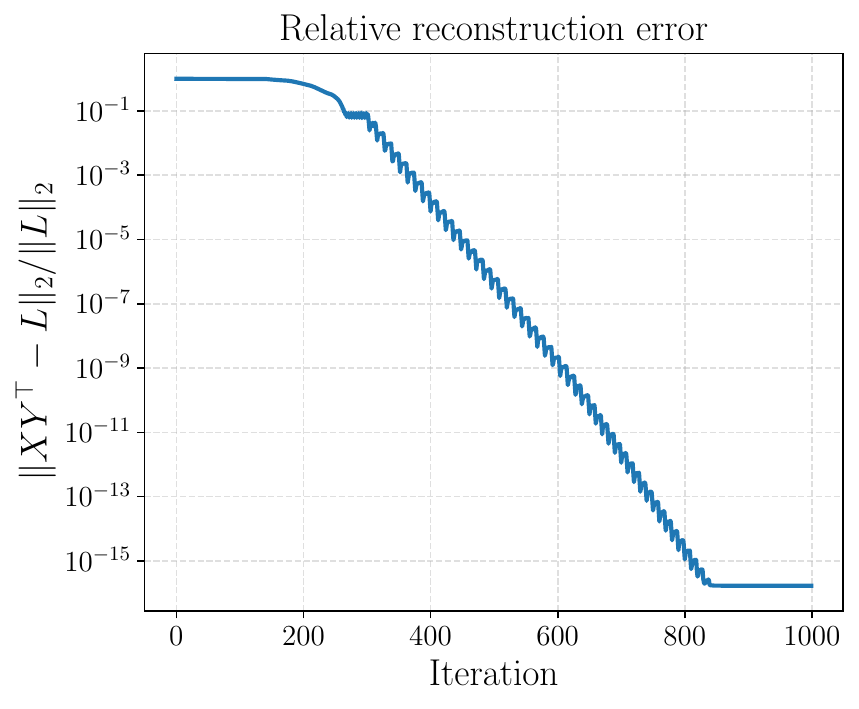}
        \caption{$m = n = 100$, $k = r = 10$.}
        \label{fig:error-exact}
    \end{subfigure}
    \hfill
    \begin{subfigure}[t]{0.48\textwidth}
        \centering
        \includegraphics[width=\textwidth]{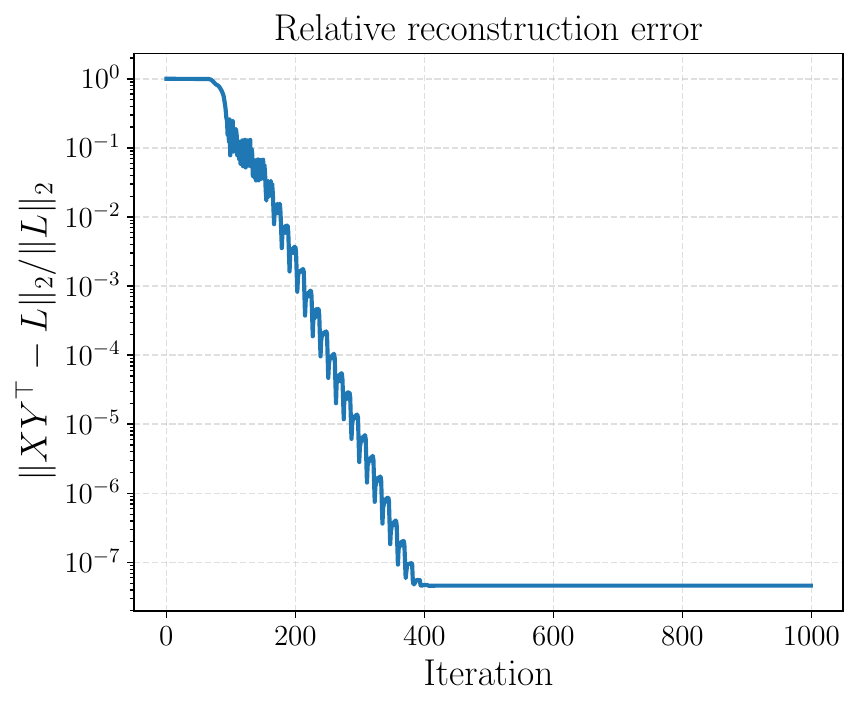}
        \caption{$m = n = 100$, $k = 10$, $r = 5$.}
        \label{fig:error-over}
    \end{subfigure}
    \caption{Convergence of the subgradient method with small initialization for solving \Eqref{eq:obj}.
    Each entry of $S$ is nonzero with probability $p = 0.1$. In all experiments, we adopt a robust exponentially decaying step-size schedule: the step size is reduced by a factor of $0.5$ whenever the objective value fails to decrease for $10$ consecutive iterations.}
    \label{fig:errors}
\end{figure}

Second, in larger-scale synthetic experiments, minimizing \Eqref{eq:obj} is often substantially faster than both convex approaches, such as PCP, and competitive with nonconvex alternatives like \cite{yi2016fast,cai2019accelerated,chen2021bridging}, while also achieving superior accuracy.
Table~\ref{tab:synthetic} presents representative comparisons in terms of runtime and recovery error, and Figure~\ref{fig:rpca_side_by_side} illustrates success rates over a range of ranks and corruption levels.
These encouraging results motivate a deeper investigation into the structural properties of \Eqref{eq:obj}.
\begin{table}[ht]
  \centering
  \caption{Robust PCA experiments with $m = 10000$, $n = 8000$, $k = 400$, and $r = 200$, averaged over 10 random instances.
  Accuracy is reported as $\|L_{\text{est}}-L\|_2/\|L\|_2$.
 Implementations of methods 2) and 3) are from \cite{lrslibrary2015} and \cite{cai_accaltproj_rpca}, respectively. We implemented methods 4) and 5) by ourselves according to the pseudo-codes presented in the respective papers \cite{yi2016fast,chen2021bridging}, as there is no publicly available implementation.}
  \label{tab:synthetic}
  \begin{tabular}{lcc}
    \toprule
    \textbf{Method} & \textbf{Time (s.)} & \textbf{Accuracy} \\
    \midrule
    \small{1) Minimizing \Eqref{eq:obj} by the subgradient method} & \small{\hphantom{0}199.4} & \small{\hphantom{11}8.21e-10} \\
    \small{2) Principal component pursuit \cite{candes2011} solved by IALM \cite{LinLiuSu2011}} & \small{\hphantom{0}479.1} & \small{\hphantom{0}1.48e-8} \\
    \small{3) Accelerated alternating projection \cite{cai2019accelerated}} & \small{\hphantom{0}196.0} & \small{\hphantom{0}2.68e-4} \\
    \small{4) Fast RPCA via gradient descent \cite{yi2016fast}} & \small{\hphantom{0}254.1} & \small{\hphantom{0}7.75e-2} \\
    \small{5) Nonconvex alternating minimization \cite{chen2021bridging}} & \small{\hphantom{0}321.0} & \small{\hphantom{0}4.64e-1} \\
    \bottomrule
  \end{tabular}
\end{table}

\begin{figure}[ht]
  \centering
  \begin{subfigure}[t]{0.49\textwidth}
    \centering
    \includegraphics[width=\linewidth]{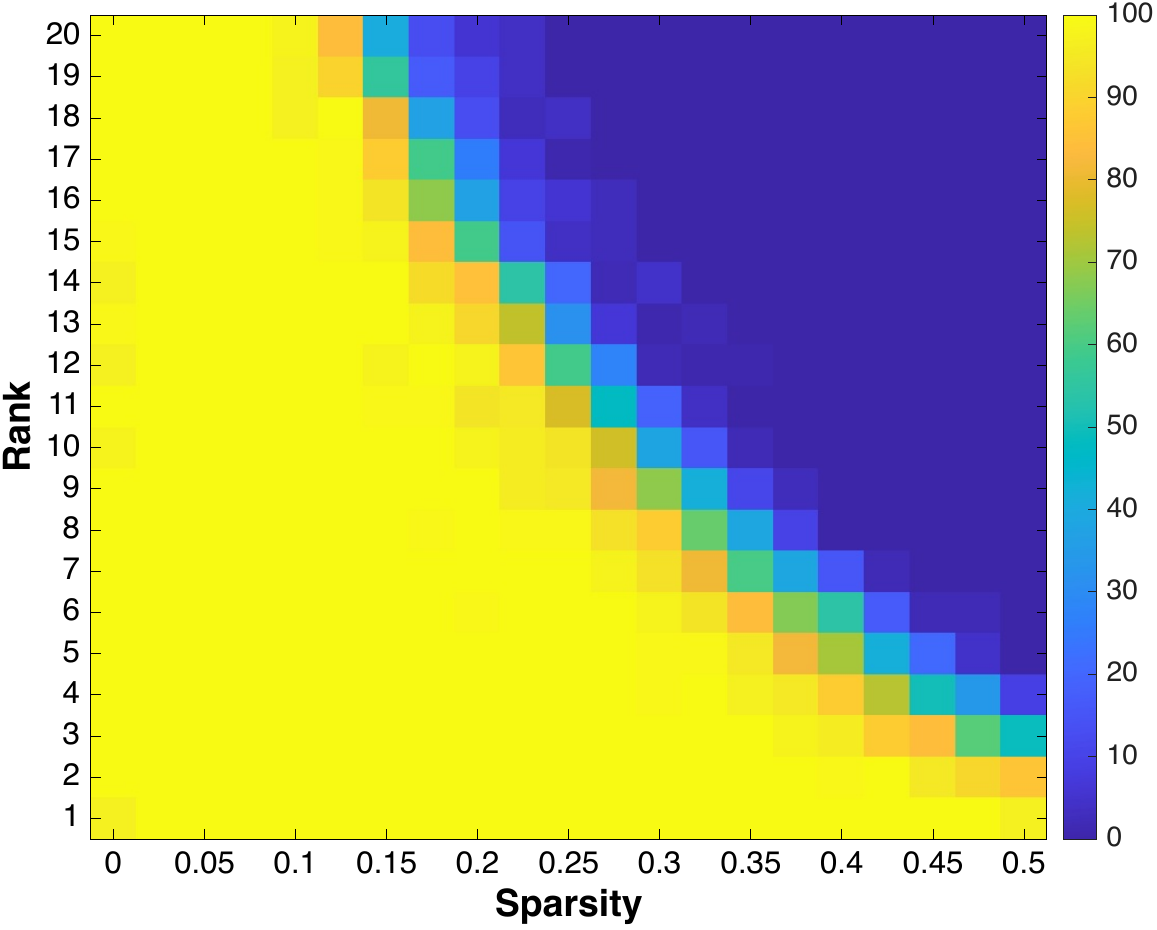}
    \caption{Factorized robust PCA (\Eqref{eq:obj}) with search rank $k = 20$ solved by the subgradient method.}
    \label{fig:rpca_subgrad}
  \end{subfigure}
  \hfill
  \begin{subfigure}[t]{0.49\textwidth}
    \centering
    \includegraphics[width=\linewidth]{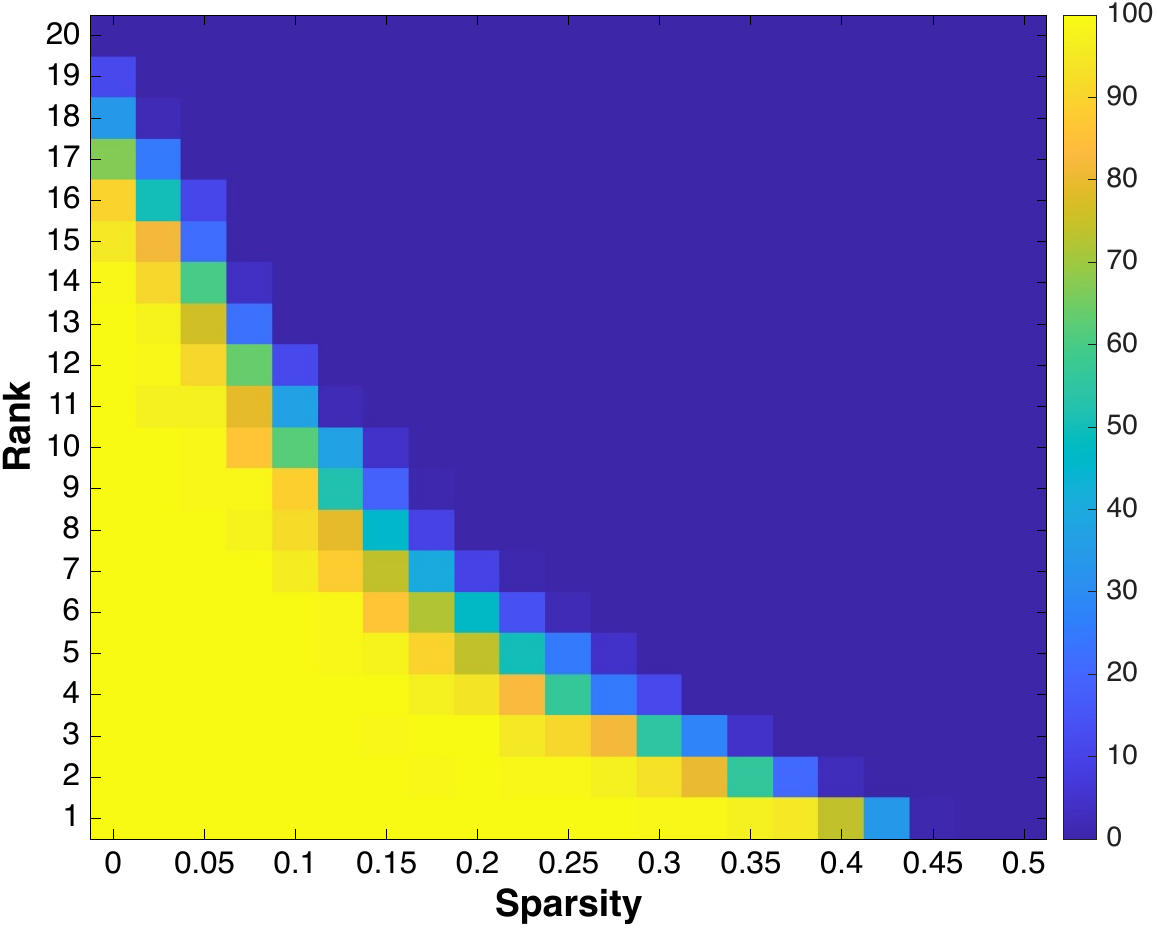}
    \caption{PCP solved by IALM.}
    \label{fig:rpca_ialm}
  \end{subfigure}
  \caption{Recovery success-rate heatmaps (in \%) for robust PCA on synthetic low-rank plus sparse matrices.
 The matrices are of size $100$ by $80$.
  Each grid point averages over $100$ independent trials;
  The success criterion is $\|L_{\mathrm{est}}-L\|_2/\|L\|_2 \le 10^{-3}$.}
  \label{fig:rpca_side_by_side}
\end{figure}

\subsection{Main results}
\label{subsec:main-results}

Despite the strong empirical performance demonstrated in Section \ref{subsec:empirical}, the theoretical understanding of \Eqref{eq:obj} remains limited. The objective is nonsmooth and nonconvex, and finding a global solution is NP-hard in general \cite{gillis2018complexity}. Most existing landscape results apply only to the rank-one, noiseless regime (i.e., $k = r = 1$ and $S = 0$). In that setting, prior work shows that all local minima of \Eqref{eq:obj} are global \cite{josz2018theory,josz2022nonsmooth,guan2024ell_1,josz2025nonsmooth}, and that the ground-truth solutions form sharp local minima \cite{charisopoulos2021low}. Our aim, by contrast, is to provide optimality guarantees for the \emph{true} factorizations in the general-rank, noisy setting.

We work under the following standard assumption in the robust PCA literature \cite{candes2011}.

\begin{assumption}
\label{assumption:standing}
The observed matrix $M \in \mathbb{R}^{m \times n}$ is the sum of a low-rank matrix $L$ and a sparse matrix $S$, i.e., $M = L + S$, where:
\begin{enumerate}
    \item \textbf{Low-rank structure and incoherence:} The matrix $L$ has rank $r$ and is $\mu$-incoherent: let $L = U \Sigma V^\top$ be its singular value decomposition, then
    \begin{equation}
    \label{eq:incoh}
    \max_i \|U_i\|_2 \leq \sqrt{\frac{\mu r}{m}}
    \quad\text{and}\quad
    \max_j \|V_j\|_2 \leq \sqrt{\frac{\mu r}{n}},
    \end{equation}
    where $U_i$ and $V_j$ denote the $i$-th and $j$-th rows of $U$ and $V$, respectively.

    \item \textbf{Sparse corruption model:} The entries of $S$ are independent, and each entry is nonzero with probability $p > 0$.
\end{enumerate}
\end{assumption}

We study the optimality of the set of true solutions with rank-$r$ factors:
\begin{equation*}
\mathcal L:= \left\{(X,Y)\in \mathbb{R}^{m\times k}\times \mathbb{R}^{n \times k}:\ 
XY^{\top} = L\ \text{and}\ \mathrm{rank}(X) = \mathrm{rank}(Y) = r\right\}.
\end{equation*}
When $k=r$, the rank condition is redundant and $\mathcal L=\{(X,Y):XY^\top=L\}$.
However, when $k > r$, the factorization space includes redundant parameterizations, and some pairs $(X, Y)$ satisfying $XY^{\top} = L$ may have higher rank and need not be critical points of the objective (see, e.g., \cite{ma2025can}). For this reason, we restrict our attention to $\mathcal{L}$.

We can now state our main theorem. It certifies that points in $\mathcal L$ are always first-order critical, and
moreover distinguishes the second-order geometry between the exactly-parameterized and overparameterized regimes.

\begin{theorem}
\label{thm:main}
Let Assumption~\ref{assumption:standing} hold with
\begin{equation*}
p\leq \frac{c}{\log(mn)}
\quad\text{and}\quad
r\leq \min\left\{c\sqrt{\frac{\min\{m, n\}}{\mu \log^2(mn)}},\,k\right\},
\end{equation*}
where $c>0$ is a sufficiently small universal constant.
Then there exists a universal constant $C>0$ such that, with probability at least $1-\exp\{-Cr\log^2(mn)\}$, any $(X^{\star},Y^{\star})\in \mathcal L$ is a
(Clarke) critical point of $f$.
Moreover, it is a local minimum when $k = r$, and a strict saddle point\footnote{We say that a point is a strict saddle point if it is a critical point that violates the second-order necessary condition for optimality \cite[Theorem~13.24]{rockafellar2009variational}.}
when $k>r$.
\end{theorem}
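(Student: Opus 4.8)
The plan is to work at a fixed point $(X^\star, Y^\star) \in \mathcal{L}$, write $L = X^\star (Y^\star)^\top$, and exploit the fact that $f$ is a composition of the convex norm $\|\cdot\|_1$ with the smooth bilinear map $(X,Y) \mapsto XY^\top - M$. Since $XY^\top - M = X Y^\top - L - S = -S$ at the true factorization, the subdifferential structure of $\|\cdot\|_1$ at $-S$ is governed entirely by the support of $S$: on the support $\Omega = \{(i,j): S_{ij} \neq 0\}$ the sign is determined, whereas off the support the (Clarke) subgradient entry ranges freely over $[-1,1]$. By the chain rule for Clarke subdifferentials of compositions with smooth maps, $0 \in \partial f(X^\star, Y^\star)$ amounts to finding a dual matrix $G$ with $\|G\|_\infty \le 1$, $G_{ij} = \operatorname{sign}(S_{ij})$ on $\Omega$, and $G(Y^\star) = 0$, $G^\top X^\star = 0$ — equivalently, $G$ lies in the intersection of the sign constraint with the orthogonal complement (in the trace inner product) of the tangent space $\{X^\star B^\top + A (Y^\star)^\top\}$. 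Because $\operatorname{rank}(X^\star) = \operatorname{rank}(Y^\star) = r$, this tangent space equals the standard rank-$r$ tangent space $T = \{U A^\top + B V^\top\}$ at $L = U\Sigma V^\top$, so the existence of such a $G$ is exactly the dual certificate condition familiar from convex robust PCA. First I would construct $G$ by the golfing scheme / least-squares-over-the-support construction of \cite{candes2011}: set $G = \operatorname{sign}(S)$ off a correction term, project iteratively to kill the $T$-component, and use the random support model (with $p \le c/\log(mn)$) plus $\mu$-incoherence to control $\|\mathcal{P}_T \mathcal{P}_\Omega\|$ and the $\ell_\infty$ norm of the off-support part, yielding the stated probability $1 - \exp\{-Cr\log^2(mn)\}$. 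This establishes the first-order (Clarke critical) claim.

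For the second-order geometry I would compute the relevant second-order object — since $f$ is not $C^2$, the right notion is the second subderivative / parabolic subderivative used in \cite[Theorem 13.24]{rockafellar2009variational}. Writing $\Phi(X,Y) = XY^\top - M$, for a direction $(A,B)$ the first-order directional term is $\langle G, \Phi'(A,B)\rangle = \langle G, X^\star B^\top + A(Y^\star)^\top\rangle = 0$ by the certificate, so the second-order behavior is driven by (i) the curvature of $\|\cdot\|_1$ in the directions where $\Phi'(A,B)$ is nonzero off $\Omega$, which contributes a nonnegative term $\sum_{(i,j)\notin\Omega} |(\Phi'(A,B))_{ij}|$, and (ii) the bilinear second-order term $\langle G, \Phi''[(A,B),(A,B)]\rangle = \langle G, 2 A B^\top \rangle$. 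When $k = r$: the first-order directional derivative $f'((X^\star,Y^\star);(A,B)) = \|\mathcal{P}_\Omega \Phi'(A,B) + \operatorname{sign}(S)\|_{\Omega,?}\ldots$ — more carefully, I would show directly that along any feasible direction the objective increases linearly, i.e. the point is a \emph{sharp} local minimum, by lower-bounding $\|(X^\star + tA)(Y^\star + tB)^\top - M\|_1 - \|S\|_1$; the key is that perturbing $XY^\top$ away from $L$ changes entries off $\Omega$ (where the residual was $0$) by an amount that, by incoherence and $|\Omega| \lesssim pmn$ small, dominates the at-most-$|\Omega|$ entries on $\Omega$ that can decrease — this is the general-rank analogue of the sharpness argument of \cite{charisopoulos2021low}.

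When $k > r$: I would exhibit an explicit descent direction certifying the strict-saddle property. The natural candidate uses the rank deficiency of the lifted factors: pick unit vectors $a \in \mathbb{R}^m$, $b \in \mathbb{R}^n$ with $a \perp \operatorname{col}(X^\star)$ and $b \perp \operatorname{col}(Y^\star)$ (possible since $k > r$ means $X^\star, Y^\star$ are not full column rank — wait, more precisely use that there exist $u$ with $X^\star u = 0$... ) — concretely, take $A = a u^\top$, $B = b v^\top$ for suitable $u, v \in \mathbb{R}^k$; then $\Phi'(A,B) = X^\star B^\top + A (Y^\star)^\top$ and $\Phi''[(A,B),(A,B)] = 2AB^\top = 2(u^\top v)\, a b^\top$. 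Choosing $u,v$ so that $\Phi'(A,B) = 0$ (which the rank deficiency allows) while $u^\top v \neq 0$ makes the whole first-order term vanish and leaves second-order contribution $2(u^\top v)\langle G, ab^\top\rangle$ plus the convex-curvature term $\|\mathcal{P}_{\Omega^c}(2(u^\top v) ab^\top)\|_1 \cdot$(something of order $t^2$) against the possibility of sign cancellations on $\Omega$; by flipping the sign of the direction one of $\pm(A,B)$ gives a strictly negative second subderivative, so the second-order necessary condition fails. I would then invoke \cite[Theorem 13.24]{rockafellar2009variational} to conclude.

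The main obstacle I anticipate is the dual-certificate construction under the \emph{general-rank} random-support model with only $p \le c/\log(mn)$ and $r \lesssim \sqrt{\min\{m,n\}/(\mu\log^2(mn))}$: one must carefully track how the operator norm bounds $\|\mathcal{P}_T\mathcal{P}_\Omega\mathcal{P}_T - p\mathcal{P}_T\|$, the $\ell_\infty \to \ell_\infty$ bounds, and the $\mu$-incoherence of $T$ interact to give the claimed exponentially small failure probability $\exp\{-Cr\log^2(mn)\}$ — this is where matrix Bernstein / golfing estimates must be redone rather than quoted. A secondary subtlety is making the nonsmooth second-order analysis rigorous: verifying that the second subderivative $\mathrm{d}^2 f((X^\star,Y^\star)\,|\,0)(A,B)$ as used in \cite[Theorem 13.24]{rockafellar2009variational} genuinely equals the $AB^\top$-plus-convex-curvature expression above (as opposed to picking up extra terms from the inner norm being only piecewise linear), and handling the ``fully affine'' directions where $\Phi'(A,B)=0$ but $AB^\top \neq 0$ with the correct sign.
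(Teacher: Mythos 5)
Your high-level skeleton matches the paper's (a dual certificate $\Lambda$ with $\Lambda=-\sign(S)$ on $\Omega$, $\|\mathcal P_{\Omega^c}(\Lambda)\|_\infty<1$, and $\Lambda V=0$, $\Lambda^\top U=0$ for first order; sharpness for $k=r$; an explicit kernel-direction for $k>r$), but there are two genuine gaps. First, your construction of the certificate via the golfing/least-squares scheme of Cand\`es et al.\ implicitly requires the \emph{signs} of $S$ to be random: the $\ell_\infty$ bound on the off-support part of $(\mathcal I-\mathcal P_T)\sum_k(\mathcal P_\Omega\mathcal P_T\mathcal P_\Omega)^k\sign(S)$ is obtained there by concentration over a Rademacher sign matrix. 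Assumption~\ref{assumption:standing} only randomizes the support, not the values, so the signs may be adversarial given $\Omega$, and a deterministic-in-signs bound on that series forces a far more restrictive condition than $r\lesssim\sqrt{\min\{m,n\}/(\mu\log^2(mn))}$. The paper avoids this entirely: Lemma~\ref{lemma:interior} shows, by a support-function/duality argument, that a valid $\Lambda$ \emph{exists} (nonconstructively) whenever $\|\mathcal P_\Omega(W)\|_1\le\frac{1-\epsilon}{2}\|W\|_1$ for all $W\in T$ --- a condition depending only on $\Omega$ --- and that restricted $\ell_1$ bound is the paper's main technical result (Theorem~\ref{thm::first-order-optimality}), proved by a mixed-tail empirical-process/generic-chaining argument that you leave entirely as an ``anticipated obstacle.'' That bound is also what powers the $k=r$ sharpness proof, so the gap propagates.

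Second, your $k>r$ descent direction is not correct as stated. With $\Phi'(A,B)=0$ one has $f(X^\star+tA,Y^\star+tB)-f(X^\star,Y^\star)=\|\mathcal P_{\Omega^c}(t^2AB^\top)\|_1+\bigl(\|\mathcal P_\Omega(t^2AB^\top)-S\|_1-\|S\|_1\bigr)$; the first term is $+t^2\|\mathcal P_{\Omega^c}(AB^\top)\|_1$ and is invariant under flipping the sign of the direction, so for a generic rank-one $ab^\top$ (in particular your $a\perp\col(X^\star)$, $b\perp\col(Y^\star)$, which is neither needed nor helpful) the off-support mass can dominate and neither $\pm(A,B)$ gives descent. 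The fix, which is what the paper does in Theorem~\ref{thm:over}, is to take $a=e_i$, $b=e_j$ at a corrupted entry and orient $AB^\top=\gamma\,\sign(S_{ij})e_ie_j^\top$, so that $\|t^2AB^\top-S\|_1=\|S\|_1-\gamma t^2$ exactly for small $t$. You also need to prove that kernel vectors $u\in\ker(Y^\star)$, $v\in\ker(X^\star)$ with $u^\top v\neq0$ actually exist when $r<k\le 2r$ (equivalently $P_XP_Y\neq0$); this is not automatic and is established in the paper by the $R_{22}\neq0$ argument. Your appeal to the second-subderivative chain rule is also more delicate than needed; the paper's explicit one-dimensional computation both exhibits non-minimality and violates the second-order necessary condition directly.
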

\Cref{thm:main} provides a geometric explanation for why first-order methods can succeed near the ground truth.
In the exact-rank case ($k=r$), the true solution set forms local minima, which is consistent with the rapid
error decay observed in Figure~\ref{fig:error-exact} under small initialization.
In contrast, when $k>r$, the same factorizations become strict saddle points: intuitively, the additional degrees
of freedom create descent directions that do not change the product $XY^\top$ to first-order, but can decrease
the nonsmooth objective by aligning with corruption patterns. Understanding why subgradient methods can still converge to such strict saddle points remains an open question for future work.

Moreover, the conditions on the corruption probability $p$ and the true rank $r$ in \Cref{thm:main} are quite general. The corruption probability $p$ may be a constant up to logarithmic factors, which is nearly tight compared with the guarantees established for convex formulations \cite{candes2011}. In contrast, our rank condition allows the true rank $r$ to be on the order of $\sqrt{\min\{m,n\}}$, rather than $\min\{m,n\}$ as in the convex setting. While this requirement may be conservative, it still covers a broad class of practically relevant problems. We view sharpening this rank condition to match the convex benchmark as an interesting direction for future work.

We present the proof of \Cref{thm:main} in Section~\ref{sec:proof}. At a high level, our main technical contribution lies in reducing the first-order optimality analysis to bounding an associated \emph{empirical process}. This reformulation enables us to leverage powerful tools from probability and statistical learning theory \cite{pollard1990empirical,sen2018gentle,talagrand2014upper,ledoux2013probability,wellner2013weak}. It also allows us to establish the result under more general conditions, as our assumptions on the corruption probability and the rank are milder than those required by direct analyses of the problem.

At the same time, \Cref{thm:main} does \emph{not} assert that the ground truth is globally optimal for all
sparse perturbations. Indeed, even though points in $\mathcal L$ enjoy the stated local optimality/saddle properties,
global minimizers of \Eqref{eq:obj} can correspond to solutions that overfit corruptions.

\begin{remark}[On global optimality of true solutions]
\label{rem:global-opt}
\Cref{thm:main} cannot, in general, be strengthened to global optimality over all sparse perturbations $S$.
Indeed, for any $L \neq 0$ and any nonempty support $\Omega \subset \{1,\dots,m\}\times\{1,\dots,n\}$,
there exists $S$ supported on $\Omega$ such that $\inf_{X,Y} f(X,Y)<\|S\|_1$.
\end{remark}

\section{Proof of \Cref{thm:main}}\label{sec:proof}
We begin by recalling some standard notation. For $p\in [1,\infty]$, $\|\cdot\|_p$ denotes the usual $\ell_p$-norm on vectors; when applied to a matrix, it refers to the $\ell_p$-norm of the vector obtained by stacking all entries. Given $A\in \mathbb R^{m\times n}$ and $p,q\in [1,\infty]$, we write $\|A\|_{p,q}$ for its $(p,q)$-matrix norm defined by
\[\|A\|_{p,q}:= \sup_{\|x\|_p\le 1}\|Ax\|_q.\]
We define the operator norm of a matrix $A\in \mathbb{R}^{m\times n}$ as $\norm{A}_{\mathrm{op}}:=\norm{A}_{2, 2}$. For notational convenience, we denote by $d_p$ the $\ell_p$-norm, $d_{p,q}$ the $(p,q)$-matrix norm, and $d_{\mathrm{op}}$ the operator norm, respectively.
 For $A,B\in \mathbb R^{m\times n}$, we let $\langle A,B\rangle := \mathrm{tr}(A^\top B)$ be their Frobenius inner product. For $\cW\subset \mathbb R^{m\times n}$, we denote by
\[
d(A,\cW):= \inf_{B\in \cW} \|A - B\|_2,
\qquad
\mathcal P_{\cW}(A):= \argmin_{B\in \cW} \|A - B\|_2
\]
the Euclidean (Frobenius) distance from $A$ to $\cW$ and the corresponding metric projection. We also denote the column space, row space, and kernel of $A$ by $\col(A)$, $\row(A)$, and $\ker(A)$, respectively. Given $f(n)$ and $g(n)$, we write $f(n)=O(g(n))$ when there exists a universal constant $C>0$ satisfying $f(n) \leq Cg(n)$ for all large enough $n$. For convenience, we also use the shorthand $f(n)\lesssim g(n)$ to denote
$f(n)=O(g(n))$.

In this section, we study the factorized robust PCA objective~(\Eqref{eq:obj}) with $M:= L+S$. For the low-rank component $L$ with $\operatorname{rank}(L)=r$, let $L = U \Sigma V^\top$ be its singular value decomposition with $U\in \mathbb R^{m\times r}$, $V\in \mathbb R^{n\times r}$, and $\Sigma\in \mathbb R^{r\times r}$. Consider the linear subspace
\begin{equation*}%
	T :=\left\{ H V^\top + U K^\top :H \in \mathbb{R}^{m \times r}, K \in \mathbb{R}^{n \times r} \right\}.
\end{equation*}
The same linear subspace was considered in previous works of low-rank matrix recovery (see, e.g., \cite{candes2009,candes2011}). For any true factorization $(X^{\star},Y^{\star})\in \mathcal L$, as $\col(X^{\star}) = \col(U)$ and $\col(Y^{\star}) = \col(V)$, we have
\begin{equation*}%
   	T = \left\{ H (Y^{\star})^\top + (X^{\star}) K^\top :H \in \mathbb{R}^{m \times k}, K \in \mathbb{R}^{n \times k} \right\}. 
\end{equation*}
For the sparse component $S$, we denote by $\Omega \subset \{1,\dots,m\}\times\{1,\dots,n\}$ its support. We write $\mathcal P_\Omega$ for the orthogonal projector onto the linear subspace of matrices supported on $\Omega$. We denote by $\Omega^c$ the complement of $\Omega$, and by $\mathcal P_{\Omega^c}$ the projector onto matrices supported on $\Omega^c$.

This section is organized as follows. In Section \ref{sec:first_order}, we certify first-order optimality by reducing the analysis to a restricted $\ell_1$ operator-norm bound for $\mathcal P_\Omega$ on the linear subspace $T$. The key probabilistic estimate is \Cref{thm::first-order-optimality}, whose proof occupies the remainder of that subsection. In Section \ref{sec:second_order}, we study second-order behavior: when $k=r$ we show that points in $\mathcal L$ are sharp local minima (Theorem~\ref{thm:localmin}), whereas in the overparameterized regime $k>r$ they fail to be local minima and instead are strict saddle points (Theorem~\ref{thm:over}). All these results together lead to \Cref{thm:main}.
\subsection{Certifying first-order optimality}\label{sec:first_order}
In this subsection, we characterize first-order optimality of $f$ in the sense of Clarke. The Clarke subdifferential of $f$ \cite{clarke1975,clarke1990} is defined for all $(X,Y) \in \mathbb{R}^{m\times k} \times \mathbb{R}^{n\times k}$ by
\begin{equation*}
\begin{aligned}
\partial f(X,Y) := \{ (P,Q) \in \mathbb{R}^{m\times k} \times \mathbb{R}^{n\times k} ~:~ &
f^\circ((X,Y);(H,K)) \ge \langle (P,Q),(H,K) \rangle, \\
& \forall (H,K)\in \mathbb{R}^{m\times k} \times \mathbb{R}^{n\times k} \}.
\end{aligned}
\end{equation*}
where $f^\circ$ denotes the generalized directional derivative defined by
\begin{equation*}
    f^\circ((X,Y);(H,K)) := \limsup_{\scriptsize\begin{array}{c} (\bar{X},\bar{Y})\rightarrow (X,Y) \\
    t \searrow 0
    \end{array}
    } \frac{f(\bar{X}+tH,\bar{Y}+tK)-f(\bar{X},\bar{Y})}{t}.
\end{equation*}
By virtue of \cite[2.3.10 Chain Rule II]{clarke1990}, it holds that
\begin{equation*}
\partial f(X,Y) = \left\{ 
        (\Lambda Y, \Lambda^{\top} X):\Lambda \in \text{sign}(X Y^{\top} - M)
    \right\},
\end{equation*}
where $\sign$ denotes the elementwise sign mapping defined by $\sign(t) := t/|t|$ if $t\neq 0$, and $\sign(0) := \left[-1,1\right]$. By the generalized Fermat's rule \cite[Proposition 2.3.2]{clarke1990}, a point $(X,Y)\in \mathbb{R}^{m\times k} \times \mathbb{R}^{n\times k}$ is a local minimum of $f$ only if $0\in \partial f(X,Y)$. We say that $(X,Y)$ is a (Clarke) critical point if $0\in \partial f(X,Y)$.

The following lemma reduces the task of checking the criticality of solutions in $\mathcal L$ to upper-bounding the $\ell_1$ operator norm of $\mathcal P_\Omega$ when it is restricted to the linear subspace $T$.

\begin{lemma}\label{lemma:interior}
If there exists $\epsilon>0$ such that $$\|\mathcal{P}_\Omega(W)\|_1 \leq \frac{1-\epsilon}{2} \|W\|_1,\quad\forall W\in T,$$ then there exists $\Lambda\in \sign(-S)$ with $\|\mathcal{P}_{\Omega^c}(\Lambda)\|_{\infty} \leq 1-\epsilon$ such that $\Lambda V = 0$ and $\Lambda^{\top} U = 0$. Consequently, any $(X^{\star},Y^{\star})\in \mathcal L$ is a critical point of $f$. 
\end{lemma}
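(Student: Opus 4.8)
The plan is to construct the dual certificate $\Lambda$ explicitly as the minimum-norm element of a natural affine subspace, and then verify the required sign, projection, and orthogonality constraints.

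First I would recall what needs to hold for $0\in\partial f(X^\star,Y^\star)$. By the formula for $\partial f$, criticality at $(X^\star,Y^\star)\in\mathcal L$ is equivalent to the existence of $\Lambda\in\sign(X^\star (Y^\star)^\top - M)=\sign(-S)$ with $\Lambda Y^\star=0$ and $\Lambda^\top X^\star=0$; since $\col(X^\star)=\col(U)$ and $\col(Y^\star)=\col(V)$, these last two conditions are equivalent to $\Lambda V=0$ and $\Lambda^\top U=0$, i.e., to $\Lambda\perp T$ (the row and column spaces of any element of $T$ lie in $\col(U)$ and $\col(V)$ respectively, so $\Lambda V=0,\ \Lambda^\top U=0$ is precisely $\langle\Lambda,W\rangle=0$ for all $W\in T$). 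The condition $\Lambda\in\sign(-S)$ means: on $\Omega$, $\Lambda$ must equal the fixed matrix $-\sign(S)$ (entrywise), and on $\Omega^c$, $\Lambda$ is free subject to $\|\mathcal P_{\Omega^c}(\Lambda)\|_\infty\le 1$. So the target is to find $\Lambda$ with $\mathcal P_\Omega(\Lambda)=-\mathcal P_\Omega(\sign S)$, $\mathcal P_{T}(\Lambda)=0$, and $\|\mathcal P_{\Omega^c}(\Lambda)\|_\infty\le 1-\epsilon$.

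The construction I would use is the standard "golfing-free" one-shot projection: since the hypothesis gives $\|\mathcal P_\Omega(W)\|_1\le\frac{1-\epsilon}{2}\|W\|_1<\|W\|_1$ for all nonzero $W\in T$, the operator $\mathcal P_T\mathcal P_\Omega\mathcal P_T$ has operator norm strictly less than $1$ on $T$ (indeed $\|\mathcal P_T\mathcal P_\Omega\mathcal P_T\|_{\mathrm{op}}\le\|\mathcal P_\Omega|_T\|_{\mathrm{op}}\le\|\mathcal P_\Omega|_T\|_{1\to 1}$-type bound; more carefully, one uses that $\langle \mathcal P_\Omega W, W\rangle\le \|\mathcal P_\Omega W\|_1\|W\|_\infty$ won't directly work, so instead I'd argue $T\cap\Omega=\{0\}$ so that $\mathcal P_T\mathcal P_{\Omega^c}\mathcal P_T$ is invertible on $T$). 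Concretely, set
\[
\Lambda := -\Big(\mathcal P_{\Omega^c} - \mathcal P_{\Omega^c}\mathcal P_T(\mathcal P_T\mathcal P_{\Omega^c}\mathcal P_T)^{-1}\mathcal P_T\mathcal P_{\Omega^c}\Big)(\sign S),
\]
or equivalently define $E:=-\mathcal P_\Omega(\sign S)$ (the forced part on $\Omega$), seek $\Lambda = E + \mathcal P_{\Omega^c}(\Lambda)$ with $\mathcal P_{\Omega^c}(\Lambda)$ chosen so that $\mathcal P_T(\Lambda)=0$; this forces $\mathcal P_T\mathcal P_{\Omega^c}(\Lambda)=-\mathcal P_T(E)$, and since $\mathcal P_T\mathcal P_{\Omega^c}\mathcal P_T$ is invertible on $T$ one can solve for $\mathcal P_{\Omega^c}(\Lambda)$. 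The cleanest route: among all $\Lambda$ with $\mathcal P_\Omega(\Lambda)=E$ and $\mathcal P_T(\Lambda)=0$, these two affine conditions are consistent precisely because $T^\perp + \{\text{matrices supported on }\Omega\}$ is everything when $T\cap(\text{supp }\Omega)=\{0\}$, which follows from the hypothesis. Pick the one of minimal Frobenius norm.

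The remaining — and main — step is the $\ell_\infty$ bound $\|\mathcal P_{\Omega^c}(\Lambda)\|_\infty\le 1-\epsilon$. Writing $\Lambda = -\sign S + \Delta$ where $\Delta\in T^\perp$ compensates: actually I'd set it up as $\Lambda = -\mathcal P_\Omega\sign S + R$ with $R$ supported on $\Omega^c$, $\mathcal P_T(\Lambda)=0$. Then on $\Omega^c$ we have $\Lambda = R = \mathcal P_{\Omega^c}(\mathcal P_T(\Lambda) - \mathcal P_T(\Lambda)) \ldots$ — more usefully, from $\mathcal P_T(\Lambda)=0$ we get $\Lambda = (\mathcal I - \mathcal P_T)(\Lambda)$, hence on $\Omega^c$, $R = -\mathcal P_{\Omega^c}\mathcal P_T(\Lambda)$. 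Now use $\mathcal P_T(\Lambda) = \mathcal P_T\mathcal P_\Omega(\Lambda) = -\mathcal P_T\mathcal P_\Omega(\sign S)$ is not quite right since $\mathcal P_T$ of the $\Omega^c$-part is nonzero; the correct identity is $\mathcal P_T(\Lambda) = \mathcal P_T(E) + \mathcal P_T(R)$ with $E=-\mathcal P_\Omega\sign S$, so $0 = \mathcal P_T(E) + \mathcal P_T(R)$, giving $\mathcal P_T(R) = -\mathcal P_T(E)$, and then $R = -\mathcal P_{\Omega^c}\mathcal P_T(R)$ won't close either since $R$ need not satisfy that. I would instead derive the fixed-point equation $R = -\mathcal P_{\Omega^c}\mathcal P_T(E+R)$, i.e. $R = -(\mathcal I + \mathcal P_{\Omega^c}\mathcal P_T)^{-1}\mathcal P_{\Omega^c}\mathcal P_T(E)$ — valid once $\|\mathcal P_{\Omega^c}\mathcal P_T\mathcal P_{\Omega^c}\|<1$... and here is exactly where the factor $\tfrac{1-\epsilon}{2}$ enters. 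From the hypothesis, for $W\in T$, $\|\mathcal P_\Omega W\|_1\le\frac{1-\epsilon}{2}\|W\|_1$, so $\|\mathcal P_{\Omega^c}W\|_1\ge\frac{1+\epsilon}{2}\|W\|_1$; iterating the Neumann series for $R$ in terms of $E$ and controlling the $\ell_\infty$ norm via the $\ell_1\to\ell_\infty$ behavior of $\mathcal P_T$, one obtains $\|R\|_\infty\le \frac{(1-\epsilon)/2}{1-(1-\epsilon)/2}\|E\|_\infty\cdot(\text{const}) = \frac{1-\epsilon}{1+\epsilon}\cdot(\ldots)$; since $\|E\|_\infty\le 1$ the geometric series sums to something $\le 1-\epsilon$. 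The hard part will be carrying out this last estimate cleanly: making the Neumann-series / contraction argument respect the $\ell_1$ hypothesis (which is an $\ell_1\to\ell_1$ statement on $T$) while the conclusion is an $\ell_\infty$ bound, so one must pass through $\|\mathcal P_{\Omega^c}\mathcal P_T(\cdot)\|_\infty \le \|\mathcal P_T(\cdot)\|_\infty$ and then bound $\|\mathcal P_T W\|_\infty$ by a multiple of $\|W\|_1$ using the structure $\mathcal P_T(W)=UU^\top W + WVV^\top - UU^\top WVV^\top$ together with incoherence — essentially the same mechanism as in the convex robust PCA dual certificate. Once $\Lambda$ with the stated properties is produced, criticality of every $(X^\star,Y^\star)\in\mathcal L$ follows immediately from $0=(\Lambda Y^\star,\Lambda^\top X^\star)\in\partial f(X^\star,Y^\star)$.
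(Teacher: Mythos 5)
Your reduction of criticality to the existence of a certificate $\Lambda\in\sign(-S)$ with $\mathcal P_T(\Lambda)=0$ and $\|\mathcal P_{\Omega^c}(\Lambda)\|_\infty\le 1-\epsilon$ is correct and matches the target of the paper's proof. However, your construction route has a genuine gap at exactly the step you flag as ``the hard part.'' The hypothesis is purely an $\ell_1\to\ell_1$ contraction of $\mathcal P_\Omega$ on $T$, and this does \emph{not} give you the quantitative control needed for the least-squares/Neumann-series certificate. Concretely: (i) convergence of the Neumann series for $(\mathcal P_T\mathcal P_{\Omega^c}\mathcal P_T)^{-1}$ requires $\|\mathcal P_T\mathcal P_\Omega\mathcal P_T\|_{\mathrm{op}}<1$ in the Frobenius sense, and the natural attempt $\|\mathcal P_\Omega W\|_2^2=\langle\mathcal P_\Omega W,W\rangle\le\|\mathcal P_\Omega W\|_1\|W\|_\infty\le\tfrac{1-\epsilon}{2}\|W\|_1\|W\|_\infty$ is useless because $\|W\|_2^2\le\|W\|_1\|W\|_\infty$ goes the wrong way; the $\ell_1$ hypothesis only yields invertibility of $\mathcal P_T\mathcal P_{\Omega^c}\mathcal P_T$ (via $T\cap\{\text{matrices supported on }\Omega\}=\{0\}$), not a contraction constant. (ii) Even granting convergence, summing the series in $\ell_\infty$ requires an $\ell_\infty\to\ell_\infty$ (or $\ell_\infty\to\ell_1\to\ell_\infty$) contraction for $\mathcal P_{\Omega^c}\mathcal P_T$ with ratio tied to $(1-\epsilon)/2$, which again does not follow from the stated hypothesis; in the convex robust PCA literature this step needs the golfing scheme together with fresh randomness and incoherence at each iteration, none of which is available inside this deterministic lemma.

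The paper avoids the construction entirely. It observes that the set $\mathcal D=\{(\Lambda V,\Lambda^\top U):\Lambda\in\sign(-S),\ \|\mathcal P_{\Omega^c}(\Lambda)\|_\infty\le 1-\epsilon\}$ is compact and convex, so $0\in\mathcal D$ if and only if its support function is nonnegative. Computing $\sigma_{\mathcal D}(H,K)=\sup_{\Lambda}\langle\Lambda,HV^\top+UK^\top\rangle$, splitting over $\Omega$ and $\Omega^c$, and applying H\"older gives the lower bound $(1-\epsilon)\|\mathcal P_{\Omega^c}(W)\|_1-\|\mathcal P_\Omega(W)\|_1$ with $W=HV^\top+UK^\top\in T$, which is nonnegative precisely under the $\ell_1$ hypothesis. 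This non-constructive duality argument is the missing idea: it converts the existence of the certificate into a single inequality that the $\ell_1$ hypothesis was tailored to verify, whereas your explicit construction demands estimates the hypothesis does not supply. I recommend abandoning the Neumann-series route and arguing via the support function (or, equivalently, a separating-hyperplane contradiction).
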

\begin{proof}
	It suffices to prove the following inclusion
	\begin{equation}\label{eq:inclusion}
		0\in \{(\Lambda V,\Lambda^{\top} U)\in \mathbb{R}^{m\times r}\times \mathbb{R}^{n\times r}: \Lambda \in \sign(-S), \|\mathcal{P}_{\Omega^c}(\Lambda)\|_{\infty} \leq 1-\epsilon\}.
	\end{equation}
	We denote the set on the right-hand side as $\mathcal{D} \subset \mathbb{R}^{m\times r}\times \mathbb{R}^{n\times r}$ and let $\sigma_\mathcal{D}: \mathbb{R}^{m\times r}\times \mathbb{R}^{n\times r}\to \mathbb{R}$ be its support function, defined by $\sigma_\mathcal{D}(H,K) :=\sup_{(A,B)\in \mathcal{D}} \langle A,H\rangle+ \langle B,K\rangle$. Let $\Xi\subset \mathbb R^{m\times n}$ be the feasible region of $\Lambda$ given by \Eqref{eq:inclusion}. Since $\mathcal{D}$ is convex, it contains $0$ if and only if $\sigma_\mathcal{D}(H,K)\ge 0$ for all $(H,K)\in \mathbb{R}^{m\times r}\times \mathbb{R}^{n\times r}$. Fix any $(H,K)$. It holds that
\begin{align*}
   \sigma_\mathcal{D}(H,K) =~&\sup_{(A,B)\in \mathcal{D}} \langle A,H\rangle+ \langle B,K\rangle\\
   =~&\sup_{\Lambda \in \Xi}~ \langle \Lambda V, H \rangle + \langle \Lambda^{\top} U, K \rangle  \\
    =~& \sup_{\Lambda \in \Xi}~ \langle \Lambda , HV^{\top} + UK^{\top} \rangle \\
    =~&\sup_{\Lambda \in \Xi}~ \langle \mathcal P_{\Omega^c}(\Lambda), \mathcal P_{\Omega^c}(HV^{\top} + UK^{\top}) \rangle+\langle \mathcal P_{\Omega}(\Lambda), \mathcal P_{\Omega}(HV^{\top} + UK^{\top}) \rangle\\
    \stackrel{(a)}{\geq} ~& (1-\epsilon)\|\mathcal{P}_{\Omega^c}(HV^{\top} + U K^{\top})\|_1 - \|\mathcal{P}_{\Omega}(HV^{\top} + U K^{\top})\|_1\\
    =~&(1-\epsilon)\|HV^\top + U K^{\top}\|_1 - (2-\epsilon)\|\mathcal{P}_{\Omega}(HV^\top + U K^{\top})\|_1\\
    \stackrel{(b)}{\ge}~& (1-\epsilon - (2-\epsilon)(1-\epsilon)/2)\|HV^\top + U K^{\top}\|_1\\
    \geq~& 0.
\end{align*}
Here in $(a)$, we use Hölder's inequality and the condition that $\|\mathcal{P}_{\Omega^c}(\Lambda)\|_{\infty} \leq 1-\epsilon$. In $(b)$, we use the condition $\|\mathcal{P}_\Omega(W)\|_1 \leq \frac{1-\epsilon}{2} \|W\|_1,\forall W\in T$. 
Lastly, any $(X^{\star},Y^{\star})\in \mathcal L$ is a critical point as $\col(X^{\star}) = \col(U)$ and $\col(Y^{\star}) = \col(V)$.
\end{proof}
The following theorem provides the desired uniform upper bound on the restricted $\ell_1$ operator norm of $\mathcal P_\Omega$ under Assumption \ref{assumption:standing}. Its proof requires several auxiliary lemmas, which we recall in Appendix \ref{sec:auxiliary}.
\begin{theorem}
\label{thm::first-order-optimality}
    Let Assumption \ref{assumption:standing} hold. Then with probability at least $1-\exp\{-Cr\log^2(mn)\}$, for all $W\in T$, we have
    \begin{equation}\label{eq:restricted_bound}
        \norm{\cP_\Omega(W)}_1\leq C\left(\sqrt{p\log(mn)}+\frac{\log^2(mn)\mu r^2}{\min\{m, n\}}+\log(mn)r\sqrt{\frac{\mu}{\min\{m, n\}}}\right)\norm{W}_1,
    \end{equation}
    where $C>0$ is a universal constant. Consequently, when $p\lesssim 1/\log(mn)$ and $r\lesssim \sqrt{\frac{\min\{m, n\}}{\mu\log^2(mn)}}$, there exists a constant $\epsilon>0$ such that, with the same probability, we have
    \begin{equation*}
        \norm{\cP_\Omega(W)}_1\leq \frac{1-\epsilon}{2}\norm{W}_1, \quad \forall W\in T.
    \end{equation*}
\end{theorem}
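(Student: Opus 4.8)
The ``consequently'' clause is immediate: under $p\lesssim 1/\log(mn)$ and $r\lesssim\sqrt{\min\{m,n\}/(\mu\log^{2}(mn))}$ each of the three terms on the right of \eqref{eq:restricted_bound} is at most a small universal constant, so $\|\mathcal P_\Omega(W)\|_1\le\frac{1-\epsilon}{2}\|W\|_1$ holds for a suitable $\epsilon>0$. Hence the task is to prove \eqref{eq:restricted_bound}, and by positive homogeneity it suffices to control
\[
Z\;:=\;\sup_{W\in T,\ \|W\|_1\le1}\ \|\mathcal P_\Omega(W)\|_1\;=\;\sup_{W\in T,\ \|W\|_1\le1}\ \sum_{i,j}\delta_{ij}\,|W_{ij}|,
\]
where $\delta_{ij}:=\mathbf{1}[(i,j)\in\Omega]$ are i.i.d.\ $\mathrm{Bernoulli}(p)$ by Assumption~\ref{assumption:standing}. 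Thus $Z$ is the supremum of a sum of independent (across entries) random variables indexed by the set $\{W\in T:\|W\|_1\le1\}$, i.e.\ a bounded empirical process. The plan is the classical route: bound $\mathbb{E}Z$ by symmetrization followed by a contraction argument, then control the deviation $Z-\mathbb{E}Z$ by Talagrand's concentration inequality. Everything is powered by the two incoherence estimates recalled among the auxiliary lemmas: for every $(i,j)$, $\|\mathcal P_T(e_ie_j^\top)\|_{\mathrm F}^{2}\le 2\mu r/\min\{m,n\}$ and $\|\mathcal P_T(e_ie_j^\top)\|_\infty\lesssim \mu r/\min\{m,n\}$, where $\mathcal P_T$ is the orthogonal projector onto $T$. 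Combined with $W=\mathcal P_T(W)$, these say every $W\in T$ is ``flat'': $\|W\|_\infty\lesssim \mu r\|W\|_1/\min\{m,n\}$ and $\|W\|_{\mathrm F}^{2}\le\|W\|_\infty\|W\|_1\lesssim \mu r\|W\|_1^{2}/\min\{m,n\}$.

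For the expectation, write $Z\le p+\sup_{W}\sum_{i,j}(\delta_{ij}-p)|W_{ij}|$, symmetrize, and apply the Ledoux--Talagrand contraction principle to the $1$-Lipschitz maps $x\mapsto\delta_{ij}|x|$ (which vanish at $0$) to strip the absolute values; up to absolute constants this leaves
\[
\mathbb{E}\!\!\sup_{W\in T,\ \|W\|_1\le1}\big\langle\mathcal P_\Omega(R),W\big\rangle\;=\;\mathbb{E}\!\!\sup_{W\in T,\ \|W\|_1\le1}\big\langle\mathcal P_T\mathcal P_\Omega(R),W\big\rangle\;\le\;\mathbb{E}\big\|\mathcal P_T\mathcal P_\Omega(R)\big\|_\infty,
\]
where $R$ has i.i.d.\ Rademacher entries. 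Each entry $\langle\mathcal P_\Omega\mathcal P_T(e_ie_j^\top),R\rangle$ is, conditionally on $\Omega$, a Rademacher sum whose coefficient vector has $\ell_2$-norm at most $\|\mathcal P_T(e_ie_j^\top)\|_{\mathrm F}\lesssim\sqrt{\mu r/\min\{m,n\}}$, so Hoeffding plus a union bound over the $mn$ entries gives $\mathbb{E}Z\lesssim p+\sqrt{\mu r\log(mn)/\min\{m,n\}}$ (a short conditional Bernstein bound on $\|\mathcal P_\Omega\mathcal P_T(e_ie_j^\top)\|_{\mathrm F}^{2}$ improves this by a further factor $\sqrt p$). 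The decisive gain is that after contraction the supremum over $T$ has collapsed into a maximum over $mn$ scalars, so the ambient dimension $(m+n)r$ of $T$ never appears.

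For the deviation, apply Talagrand's concentration inequality for suprema of bounded empirical processes (Bousquet's form): the weak variance is $\sigma^{2}=\sup_{W}p(1-p)\|W\|_{\mathrm F}^{2}\lesssim p\mu r/\min\{m,n\}$ (and trivially $\le p$) and the uniform envelope is $b=\sup_{W}\max_{i,j}|W_{ij}|\lesssim \mu r/\min\{m,n\}$, whence $\mathbb{P}(Z\ge\mathbb{E}Z+t)\le\exp(-c\min\{t^{2}/(\sigma^{2}+b\,\mathbb{E}Z),\ t/b\})$. Choosing $t$ so that the right side is $\exp(-Cr\log^{2}(mn))$ and adding $\mathbb{E}Z$ yields a bound of the asserted form \eqref{eq:restricted_bound}: the term $\sqrt{p\log(mn)}$ plays the role of the variance-driven (sub-Gaussian) contribution and is exactly what forces the hypothesis $p\lesssim 1/\log(mn)$, while $\log^{2}(mn)\mu r^{2}/\min\{m,n\}$ and $\log(mn)r\sqrt{\mu/\min\{m,n\}}$ are the envelope-driven (sub-exponential) contributions, kept below a constant by the rank hypothesis. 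Equivalently, all three terms can be read off a single generic-chaining tail estimate for $Z$.

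I expect the main obstacle to be organizational rather than any single hard inequality: the argument must be set up so that (i) the dimension $(m+n)r$ of $T$ never enters — which is precisely why one linearizes via contraction and then only union-bounds over $mn$ scalars, instead of, say, running Dudley's entropy integral over $T\cap\{\|W\|_1\le1\}$, which would cost an unacceptable $\sqrt{(m+n)r}$ — and (ii) the tail must hold at the sharp exponent $r\log^{2}(mn)$, which is where the \emph{pointwise} incoherence bound $\|\mathcal P_T(e_ie_j^\top)\|_\infty\lesssim \mu r/\min\{m,n\}$ is indispensable: it alone makes the envelope $b$ small enough to tame the heavy-tailed (sub-exponential) part of Talagrand's inequality. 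Once these two points are in place, matching the precise three-term form of \eqref{eq:restricted_bound} and tracking the constant $C$ is routine bookkeeping.
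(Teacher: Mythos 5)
Your proposal is correct in substance but follows a genuinely different route from the paper. The paper runs a generic-chaining argument: it verifies a mixed subgaussian--subexponential increment condition via Bernstein, bounds $\gamma_2(\cW,d_2)\lesssim\sqrt{\log(mn)}$ by the majorizing measure theorem together with the comparison $\sup_{W}\langle G,W\rangle\le\|G\|_\infty$, and bounds $\gamma_1(\cW,d_\infty)$ by Dudley's entropy integral, which requires a rather delicate covering of $\cW$ through the decomposition $W=HV^\top+UK^\top-UZV^\top$ and a volume/sparsity trade-off in the entropy estimates. You instead linearize the expectation by symmetrization plus the Ledoux--Talagrand contraction principle (legitimate here since $x\mapsto\delta_{ij}|x|$ is a contraction vanishing at $0$), collapsing $\bE\sup_W\sum(\delta_{ij}-p)|W_{ij}|$ to $\bE\|\cP_T\cP_\Omega(R)\|_\infty$, a maximum of $mn$ conditionally subgaussian scalars, and then obtain the tail from Talagrand/Bousquet-type concentration (in the Klein--Rio form for independent, non-identically indexed summands) with weak variance $p\sup_W\|W\|_2^2$ and envelope $\sup_W\|W\|_\infty$. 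Chasing your constants, the three resulting contributions are each dominated by the corresponding terms in \Eqref{eq:restricted_bound}, so the theorem follows; your expectation bound is in fact slightly sharper than the paper's. What your approach buys is the complete avoidance of the entropy computation in the paper's Step 4; what the paper's buys is a single unified tail estimate from \Cref{thm:mixed-tail} without invoking contraction or Bousquet. Two small points to tighten: the incoherence estimates $\|\cP_T(e_ie_j^\top)\|_2^2\le\mu r(1/m+1/n)$ and $\|\cP_T(e_ie_j^\top)\|_\infty\le\mu r(1/m+1/n)$ are not off-the-shelf auxiliary lemmas --- they must be derived from the identity $\cP_T(e_ie_j^\top)=u_ie_j^\top+e_iv_j^\top-u_iv_j^\top$ and \Eqref{eq:incoh}, exactly as the paper does in its diameter lemma; and you should state explicitly which version of Talagrand's concentration inequality you invoke, since the summands are indexed by the entries $(i,j)$ rather than drawn as i.i.d.\ samples from a single class.
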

\subsubsection{Proof of \Cref{thm::first-order-optimality}}
To prove the desired upper bound \Eqref{eq:restricted_bound}, we define
\begin{equation*}
    \cW := \bigl\{ W\in T : \|W\|_1=1 \bigr\},
\end{equation*}
and for each $W\in \cW$,
\begin{equation*}
    X_W:=\|\cP_{\Omega}(W)\|_1 - p\|W\|_1=\sum_{i,j}(\Omega_{ij}-p)|W_{ij}|,
\end{equation*}
where each $\Omega_{ij}$ is an independent Bernoulli random variable with parameter $p$; that is, $\Omega_{ij} = 1$ with probability $p$, and $\Omega_{ij} = 0$ otherwise. Note that $\bE [X_W]=0$ for all $W\in \mathbb{R}^{m\times n}$. Hence, to derive \Eqref{eq:restricted_bound}, it suffices to study the following random process
\begin{equation*}
    \sup_{W\in\cW} X_W.
\end{equation*}
\begin{sloppypar} \noindent To this goal, we show that this random process satisfies the mixed subgaussian–subexponential increments condition (\Cref{def::subgaussian-subexponential}), thereby enabling the application of generic chaining bounds (\Cref{thm:mixed-tail}).
\end{sloppypar}

\noindent\textit{Step 1: check the mixed-tail increment condition.}
For any $W, W'\in \cW$, we have
\begin{equation*}
    X_W-X_{W'}=\sum_{i,j}\underbrace{(\Omega_{ij}-p)\left(|W_{ij}|-|W'_{ij}|\right)}_{:=Z_{ij}},
\end{equation*}
where $\bE [Z_{ij}]=0$, $|Z_{ij}|\le \|W-W'\|_\infty$, and
\begin{equation*}
    \sum_{i,j}\Var(Z_{ij})=p(1-p)\sum_{i,j}\left(|W_{ij}|-|W'_{ij}|\right)^2\le p\sum_{i,j}\left(W_{ij}-W'_{ij}\right)^2= p\|W-W'\|_2^2.
\end{equation*}
Therefore, applying Bernstein's inequality (Lemma~\ref{lem::bernstein}) yields, for all $u\ge0$,
\begin{equation*}%
\mathbb{P}\Bigl( |X_W-X_{W'}|  \ge  \sqrt{u}\cdot  \sqrt{2p}\norm{W-W'}_2 + u\cdot  \frac{1}{3}\norm{W-W'}_\infty \Bigr)  \le  2e^{-u}.
\end{equation*}
Thus, the random process $(X_W)_{W\in\cW}$ has mixed subgaussian/subexponential increments with respect to $(\sqrt{2p}\norm{\cdot}_2,\frac{1}{3}\norm{\cdot}_\infty)$.

\noindent\textit{Step 2: upper bounds on diameters.} We next establish upper bounds for the diameters of $\cW$ under the Frobenius norm $\|\cdot\|_2$ and the $\ell_\infty$-norm $\|\cdot\|_\infty$, denoted $\Delta_2(\cW)$ and $\Delta_\infty(\cW)$, respectively.
\begin{lemma}
\label{lemma:diameter_bound}
The diameters of $\cW$ under the Frobenius and $\ell_\infty$-norms satisfy:
\begin{equation*}
    \begin{aligned}
        \Delta_2(\cW) &\le 2 \sqrt{\mu r \left( \frac{1}{m} + \frac{1}{n} \right)},\\
        \Delta_\infty(\cW) &\le 2 \mu r \left( \frac{1}{m} + \frac{1}{n} \right).
    \end{aligned}
\end{equation*}
\end{lemma}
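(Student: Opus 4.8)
The plan is to bound each of the two diameters by reducing them to diameters of the set of matrices $W = HV^\top + UK^\top \in T$ with $\|W\|_1 = 1$, using incoherence to control entrywise and Frobenius sizes in terms of $\|W\|_1$. The key observation is that for any $W \in T$ with normalization $\|W\|_1 = 1$, we have the two elementary norm comparisons $\Delta_2(\cW) \le 2\sup_{W \in \cW}\|W\|_2$ and $\Delta_\infty(\cW) \le 2\sup_{W \in \cW}\|W\|_\infty$ by the triangle inequality, so it suffices to bound $\|W\|_2$ and $\|W\|_\infty$ for a single $W \in T$ with $\|W\|_1 = 1$.

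\medskip

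First I would recall that membership $W \in T$ is equivalent to $W = \mathcal P_T(W)$, where $\mathcal P_T$ is the orthogonal projection onto $T$; concretely $\mathcal P_T(W) = UU^\top W + WVV^\top - UU^\top W VV^\top$. The crucial structural fact is that each column of $W$ lies in $\col(U) + \text{(a single vector)}$ — more precisely, the row space of $W$ is contained in $\row(U^\top W) + \row(V^\top)$, i.e.\ $W = UU^\top W + (I - UU^\top)WVV^\top$, so that $\row(W) \subseteq \col(V)$ up to the first term and $\col(W) \subseteq \col(U)$ up to the second. Writing $W = A V^\top + U B^\top$ with $A = W V \in \mathbb R^{m\times r}$ and $B^\top = U^\top W (I - VV^\top) \in \mathbb R^{r \times n}$ gives a decomposition where I can bound each entry of $W$ by Cauchy–Schwarz: $|W_{ij}| = |A_i V_j^\top + U_i B_j^\top| \le \|A_i\|_2\|V_j\|_2 + \|U_i\|_2\|B_j\|_2 \le \sqrt{\tfrac{\mu r}{n}}\|A_i\|_2 + \sqrt{\tfrac{\mu r}{m}}\|B_j\|_2$ using incoherence \eqref{eq:incoh}. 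Then $\|W\|_\infty \le \sqrt{\tfrac{\mu r}{n}}\max_i\|A_i\|_2 + \sqrt{\tfrac{\mu r}{m}}\max_j\|B_j\|_2$, and I would relate $\max_i\|A_i\|_2 \le \|A\|_{\mathrm{op}} \le \|W\|_{\mathrm{op}}$ and similarly for $B$; finally $\|W\|_{\mathrm{op}} \le \|W\|_2 \le \|W\|_1 = 1$ — wait, that last step is false in general, so the actual route is: $\|W\|_{\mathrm{op}}\le \|W\|_1$ since $\|W\|_{\mathrm{op}}\le \|W\|_{2,2}\le$ (nuclear norm) and more directly $\|W\|_{\mathrm{op}}\le \|W\|_2 \le \|W\|_1$ uses $\|W\|_2\le\|W\|_1$, which does hold. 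So $\max_i\|A_i\|_2,\max_j\|B_j\|_2 \le 1$ and we get $\|W\|_\infty \le \sqrt{\mu r/n} + \sqrt{\mu r/m} \le 2\mu r(1/m + 1/n)$ after a mild simplification (bounding $\sqrt{\mu r/n}\le \mu r(1/m+1/n)$ when $\mu r \ge 1$, which holds since $r\ge 1$). For the Frobenius diameter, from $W = AV^\top + UB^\top$ and orthonormality of the columns of $U$ and $V$ one gets $\|W\|_2 \le \|A\|_2 + \|B\|_2 \le \sqrt{r}\|A\|_{\mathrm{op}} + \sqrt r\|B\|_{\mathrm{op}}$; but this loses a $\sqrt r$. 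A better bound: $\|W\|_2^2 = \|\mathcal P_T(W)\|_2^2$ and using the rank of $\mathcal P_T(W)$ is at most $2r$, so $\|W\|_2 \le \sqrt{2r}\,\|W\|_{\mathrm{op}}$ — still a $\sqrt r$. To match the stated bound $\Delta_2(\cW)\le 2\sqrt{\mu r(1/m+1/n)}$ I instead bound $\|W\|_2^2 \le \|W\|_\infty \|W\|_1 = \|W\|_\infty \le 2\mu r(1/m+1/n)$, giving $\|W\|_2 \le \sqrt{2\mu r(1/m+1/n)}$, hence the diameter bound with the factor $2$.

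\medskip

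So the cleanest order is: (i) prove the entrywise bound $\|W\|_\infty \le \mu r(1/m + 1/n)$ for every $W \in \cW$ via the decomposition $W = WVV^\top + UU^\top W(I-VV^\top)$, incoherence, and $\|W\|_{\mathrm{op}} \le \|W\|_2 \le \|W\|_1 = 1$; (ii) deduce $\|W\|_2^2 \le \|W\|_\infty\|W\|_1 \le \mu r(1/m+1/n)$; (iii) apply the triangle inequality to get the stated factor-$2$ diameter bounds. The main obstacle is step (i): getting the entrywise bound genuinely clean, because the naive splitting $W = AV^\top + UB^\top$ does not have $A$ and $B$ with orthonormal-controlled rows unless one is careful about which projector absorbs the cross term $UU^\top W VV^\top$; one must verify that $A := WV$ and $B^\top := U^\top W - U^\top W VV^\top$ indeed reconstruct $W$ exactly for $W \in T$ and that $\|A\|_{\mathrm{op}}, \|B\|_{\mathrm{op}} \le \|W\|_{\mathrm{op}}$ (the latter because left/right multiplication by a projector is a contraction in operator norm). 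Everything else is routine algebra with incoherence and the elementary inequalities $\|W\|_{\mathrm{op}}\le\|W\|_2\le\|W\|_1$ and $\|W\|_2^2\le\|W\|_\infty\|W\|_1$.
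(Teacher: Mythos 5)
Your reduction to $\sup_{W\in\cW}\|W\|_2$ and $\sup_{W\in\cW}\|W\|_\infty$ via the triangle inequality is fine, and the step $\|W\|_2^2\le\|W\|_\infty\|W\|_1$ would indeed deliver the Frobenius bound \emph{if} you had the stated $\ell_\infty$ bound. But your proof of the entrywise bound has a genuine gap. The chain $\max_i\|A_i\|_2\le\|A\|_{\mathrm{op}}\le\|W\|_{\mathrm{op}}\le\|W\|_2\le\|W\|_1=1$ only yields
\[
\|W\|_\infty\;\le\;\sqrt{\tfrac{\mu r}{n}}+\sqrt{\tfrac{\mu r}{m}},
\]
and the ``mild simplification'' $\sqrt{\mu r/n}\le\mu r(1/m+1/n)$ is false in the relevant regime: with $m=n$ it requires $\mu r\ge n/4$, whereas the theorem operates under $\mu r^2\lesssim \min\{m,n\}/\log^2(mn)$, so $\mu r/n\ll1$ and $\sqrt{\mu r/n}\gg\mu r/n$. (The hypothesis $\mu r\ge1$ is irrelevant here; you would need $\mu r\gtrsim n$.) Your $\ell_\infty$ bound is therefore weaker than the claimed one by a factor of order $\sqrt{\min\{m,n\}/(\mu r)}$, and since your Frobenius bound is derived from it, that bound degrades as well ($\|W\|_2\lesssim(\mu r/\min\{m,n\})^{1/4}$ instead of $(\mu r/\min\{m,n\})^{1/2}$). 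The loss is structural: a single Cauchy--Schwarz split $|W_{ij}|\le\|A_i\|_2\|V_j\|_2+\|U_i\|_2\|B_j\|_2$ with $\|A_i\|_2,\|B_j\|_2\le1$ can only ever extract \emph{one} square-root incoherence factor per term, while the target bound $\mu r(1/m+1/n)$ is of product type (two factors).

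The paper gets the product-type bound by dualizing through the self-adjoint projector $\cP_T$: for $W\in T$ with $\|W\|_1=1$, $W_{k\ell}=\langle E_{k\ell},\cP_T(W)\rangle=\langle\cP_T(E_{k\ell}),W\rangle\le\|\cP_T(E_{k\ell})\|_\infty\|W\|_1$, and then bounds the entries of $\cP_T(E_{k\ell})=u_ke_\ell^\top+e_kv_\ell^\top-u_kv_\ell^\top$ (with $u_k=\cP_Ue_k$, $v_\ell=\cP_Ve_\ell$): each surviving coordinate is either a projector entry $(\cP_U)_{ik}=\langle U^\top e_i,U^\top e_k\rangle$, already of size $\mu r/m$ by Cauchy--Schwarz on two incoherent rows, or a product of two such coordinates, giving $\mu r/m+\mu r/n$ overall; the Frobenius bound is obtained analogously from $\|W\|_2\le\max_{i,j}\|\cP_T(E_{ij})\|_2$. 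Your decomposition could in principle be salvaged by bounding $\|A_i\|_2=\|W_{i,:}V\|_2\le\sum_j|W_{ij}|\,\|V_j\|_2\le\sqrt{\mu r/n}\,\|W\|_1$ rather than passing through operator norms (and treating the $B$-block similarly, with care for the cross term), but as written the argument does not establish the lemma.
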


\begin{proof}
Let \(\cP_U := UU^\top\) and \(\cP_V := VV^\top\) denote the orthogonal projectors onto \(\col(U)\) and \(\col(V)\), respectively. For any \(i\in \{1,\ldots,m\}\) and \(j\in \{1,\ldots,n\}\), we define
\begin{equation*}
u_i := \cP_U e_i \in \mathbb{R}^m, \qquad v_j := \cP_V e_j \in \mathbb{R}^n.
\end{equation*}
Then, by \(\mu\)-incoherence (\Eqref{eq:incoh}) and Cauchy-Schwarz inequality, we have
\begin{equation}
\label{eq:incoh-l2-linf}
\|u_i\|_2^2 = e_i^\top \cP_U e_i = \|U^\top e_i\|_2^2 \le \frac{\mu r}{m}.
\end{equation}
Similarly, we have $\|v_j\|_2^2 \le \frac{\mu r}{n}$.
Furthermore, we also have
\begin{equation*}
    |( \cP_U )_{ab}| = |\langle U^\top e_a, U^\top e_b\rangle| \le \frac{\mu r}{m},\quad
|( \cP_V )_{cd}| \le \frac{\mu r}{n}.
\end{equation*}

\noindent\textbf{Frobenius diameter.}
Fix \(W\in\cW\). Using the canonical basis \(E_{ij}:=e_i e_j^\top\) and linearity of \(\cP_T\), we have
\begin{equation*}
W = \cP_T(W) = \cP_T\Big(\sum_{i,j} W_{ij} E_{ij}\Big)
\;=\; \sum_{i,j} W_{ij} \cP_T(E_{ij}).
\end{equation*}
By the triangle inequality, we obtain
\begin{equation*}
    \begin{aligned}
        \|W\|_2 &\le \sum_{i,j} |W_{ij}| \|\cP_T(E_{ij})\|_2\le \Big(\sum_{i,j}|W_{ij}|\Big) \max_{i,j}\|\cP_T(E_{ij})\|_2= \max_{i,j}\|\cP_T(E_{ij})\|_2,
    \end{aligned}
\end{equation*}
where we use \(\|W\|_1=1\) in the last step. For any $i\in \{1,\ldots,m\}, j\in \{1,\ldots,n\}$, we have
\begin{equation}
\label{eq:PT-on-basis}
\cP_T(E_{ij}) = u_i e_j^\top + e_i v_j^\top - u_i v_j^\top.
\end{equation}
Denote \(a := \|u_i\|_2^2\) and \(b := \|v_j\|_2^2\) for short. A direct expansion of the inner product yields
\begin{equation*}
\begin{aligned}
\|\cP_T(E_{ij})\|_2^2
&= \|u_i e_j^\top\|_2^2 + \|e_i v_j^\top\|_2^2 + \|u_i v_j^\top\|_2^2
\\
&\quad + 2\langle u_i e_j^\top, e_i v_j^\top\rangle
- 2\langle u_i e_j^\top, u_i v_j^\top\rangle
- 2\langle e_i v_j^\top, u_i v_j^\top\rangle \\
&\stackrel{(a)}{=} a + b + ab + 2(u_i^\top e_i)(e_j^\top v_j) - 2a\cdot(e_j^\top v_j) - 2b\cdot(e_i^\top u_i)\\
&=a + b - ab\\
&\le a + b\\
&\leq \frac{\mu r}{m} + \frac{\mu r}{n}.
\end{aligned}
\end{equation*}
Here, in $(a)$, we use the facts that \(u_i^\top e_i = \|u_i\|_2^2 = a\) and \(e_j^\top v_j = \|v_j\|_2^2 = b\). Therefore, we derive that
\begin{equation*}
\sup_{W\in\cW}\|W\|_2 \le \max_{i,j}\|\cP_T(E_{ij})\|_2\le \sqrt{\frac{\mu r}{m} + \frac{\mu r}{n}},
\end{equation*}
which implies
\begin{equation*}
\Delta_2(\cW) = 2 \sup_{W\in\cW}\|W\|_2
\le 2 \sqrt{\mu r\left(\frac{1}{m}+\frac{1}{n}\right)}.
\end{equation*}

\noindent\textbf{Entrywise \(\ell_\infty\) diameter.}
Fix \(W\in\cW\) and indices \((k,\ell)\in \{1,\ldots,m\}\times \{1,\ldots,n\}\). Since \(W\in T\) and \(\cP_T\) is the orthogonal projector onto \(T\), we have
\begin{equation*}
W_{k\ell} = \langle E_{k\ell}, W\rangle =\langle E_{k\ell}, \cP_T(W)\rangle = \langle \cP_T(E_{k\ell}), W\rangle.
\end{equation*}
By duality of \(\|\cdot\|_1\) and \(\|\cdot\|_\infty\), we have
\begin{equation*}
|W_{k\ell}| \le \|\cP_T(E_{k\ell})\|_\infty  \|W\|_1 = \|\cP_T(E_{k\ell})\|_\infty.
\end{equation*}
Thus, \(\|W\|_\infty \le \max_{k,\ell} \|\cP_T(E_{k\ell})\|_\infty\). Now, it suffices to bound $\|\cP_T(E_{k\ell})\|_\infty$ uniformly over all $k,\ell$.
From \Eqref{eq:PT-on-basis}, the \((i,j)\)-th entry of \(\cP_T(E_{k\ell})\) is
\begin{equation}
\label{eq:entrywise-form}
\big(\cP_T(E_{k\ell})\big)_{ij} = (u_k)_i\delta_{\ell j} + \delta_{k i}(v_\ell)_j - (u_k)_i(v_\ell)_j,
\end{equation}
where \(\delta\) denotes the Kronecker delta, defined by $\delta_{ij}=1$ if $i=j$, and $\delta_{ij}=0$ otherwise. Set \(a:=\|u_k\|_2^2 \le \frac{\mu r}{m}\) and \(b:=\|v_\ell\|_2^2 \le \frac{\mu r}{n}\). Using \Eqref{eq:incoh-l2-linf}, each coordinate satisfies
\(
|(u_k)_i| = |(\cP_U)_{i k}| \le \frac{\mu r}{m}
\)
and
\(
|(v_\ell)_j| \le \frac{\mu r}{n}.
\)
We now bound \(|(\cP_T(E_{k\ell}))_{ij}|\) by considering the following four cases:
\begin{itemize}
    \item \textbf{Case 1: $i=k, j=\ell$.} We have
    \begin{equation*}
        |(u_k)_k + (v_\ell)_\ell - (u_k)_k (v_\ell)_\ell|
= |a + b - ab| \le a + b.
    \end{equation*}
    \item \textbf{Case 2: $i=k, j\neq \ell$.} We have
    \begin{equation*}
        |(v_\ell)_j - (u_k)_k (v_\ell)_j|
= |(v_\ell)_j||1 - (u_k)_k| \le |(v_\ell)_j|(1 + |(u_k)_k|) \le b + ab \le a + b.
    \end{equation*}
    \item \textbf{Case 3: $i\neq k, j=\ell$.} We have
    \begin{equation*}
        |(u_k)_i - (u_k)_i (v_\ell)_\ell|
\le |(u_k)_i|(1 + |(v_\ell)_\ell|) \le a + ab \le a + b.
    \end{equation*}
    \item \textbf{Case 4: $i\neq k, j\neq \ell$.} We have
    \begin{equation*}
        |(u_k)_i (v_\ell)_j| \le ab \le a + b.
    \end{equation*}
\end{itemize}
Here, in the above inequalities, we use \(0\le a,b\le 1\) repeatedly, which holds since they are squared norms of projections. Therefore, for all \(i,j\), we have
\begin{equation*}
\big|\big(\cP_T(E_{k\ell})\big)_{ij}\big| \le a + b \le \frac{\mu r}{m} + \frac{\mu r}{n},
\end{equation*}
which implies
\begin{equation*}
\|\cP_T(E_{k\ell})\|_\infty \le \frac{\mu r}{m} + \frac{\mu r}{n}\qquad\text{for all } k,\ell.
\end{equation*}
Taking the maximum over \((k,\ell)\) and recalling \(\|W\|_1=1\), we obtain
\begin{equation*}
\sup_{W\in\cW}\|W\|_\infty \le \frac{\mu r}{m} + \frac{\mu r}{n}\quad
\implies\quad
\Delta_\infty(\cW) = 2 \sup_{W\in\cW}\|W\|_\infty \le 2\mu r\left(\frac{1}{m}+\frac{1}{n}\right).
\end{equation*}
This completes the proof.
\end{proof}

\noindent\textit{Step 3: bounding the $\gamma_2$-functional via Gaussian process.}
By the majorizing measure theorem (\Cref{lem::majorizing-measure}), we have $\gamma_2(\cW,d_2)\lesssim \bE \left[\sup_{W\in\cW} Y_W\right]$ where $(Y_W)_{W\in \cW}$ is a centered Gaussian process defined by
$Y_W= \langle G,W\rangle$ with $G$ an $m\times n$ matrix of i.i.d. $\mathcal N(0,1)$ entries. For the Gaussian process $(Y_W)_{W\in \cW}$, note that
\begin{equation*}
    \sup_{W\in\cW} Y_W
= \sup_{\substack{W\in T\\ \|W\|_1=1}}\langle G,W\rangle
\le  \sup_{\|W\|_1=1}\langle G,W\rangle
= \|G\|_\infty.
\end{equation*}
Applying the maximal inequality (\Cref{lem::maximal-ineq}), we have 
\begin{equation*}
    \bE\left[\sup_{W\in\cW} Y_W\right]\leq \bE\left[ \|G\|_\infty\right]\lesssim \sqrt{\log(mn)}.
\end{equation*}
Therefore, we have
\begin{equation*}%
\gamma_2(\cW,d_2)\lesssim\ \bE\left[\sup_{W\in\cW} Y_W\right]\lesssim \sqrt{\log(mn)}.
\end{equation*}

\noindent\textit{Step 4: bounding the $\gamma_1$-functional via Dudley's integral.}
First, we invoke Lemma~\ref{lem::dudley} to bound the $\gamma_1$-functional via Dudley’s entropy integral:
\begin{equation*}
    \gamma_1(\cW, d_\infty)\leq C \int_0^{\infty}\log\left(N(\cW, d_\infty, \varepsilon)\right)\operatorname{d}\!\varepsilon,
\end{equation*}
where $C>0$ is a universal constant.
To proceed, we decompose the set $\cW$ by introducing the following three sets:
\begin{equation*}
    \mathcal{H}:=\left\{H\in \mathbb{R}^{m\times r}: \norm{H}_{2, 1}\leq \sqrt{\frac{\mu r}{n}}\right\}, \quad \mathcal{K}:=\left\{K\in \mathbb{R}^{n\times r}: \norm{K}_{2, 1}\leq \sqrt{\frac{\mu r}{m}}\right\},
\end{equation*}
and 
\begin{equation*}
    \mathcal{Z}:=\left\{Z\in \mathbb{R}^{r\times r}: \norm{Z}_{2}\leq \frac{\mu r}{\sqrt{mn}}\right\}.
\end{equation*}
We now bound the metric entropy $\log( N(\cW, d_\infty, \varepsilon))$ by relating it to the metric entropies of $\mathcal{H}$, $\mathcal{K}$, and $\mathcal{Z}$ under appropriate norms.
\begin{lemma}
    The metric entropy $\log( N(\cW, d_\infty, \varepsilon))$ satisfies the following bound:
    \begin{equation*}
        \begin{aligned}
            \log(N(\cW, d_\infty, \varepsilon))&\leq \log\left(N\left(\mathcal{H}, d_{2, \infty}, \frac{1}{3}\sqrt{\frac{n}{\mu r}}\varepsilon\right)\right) + \log\left(N\left(\mathcal{K}, d_{2, \infty}, \frac{1}{3}\sqrt{\frac{m}{\mu r}}\varepsilon\right)\right)\\
            &\quad+ \log\left(N\left(\mathcal{Z}, d_{\mathrm{op}}, \frac{1}{3}\frac{\sqrt{mn}}{\mu r}\varepsilon\right)\right).
        \end{aligned}
\end{equation*}
\end{lemma}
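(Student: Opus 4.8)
The plan is to build an $\varepsilon$-net of $\cW$ in $d_\infty$ by lifting nets of $\mathcal H$, $\mathcal K$, and $\mathcal Z$ through the orthogonal structure of the tangent space $T$. First I would use the standard identity $\cP_T(W)=\cP_U W+W\cP_V-\cP_U W\cP_V$ (which follows from \eqref{eq:PT-on-basis} by linearity), so that every $W\in T$ obeys $W=\cP_U W+W\cP_V-\cP_U W\cP_V$. Introducing
\[H:=WV\in\mathbb{R}^{m\times r},\qquad K:=W^\top U\in\mathbb{R}^{n\times r},\qquad Z:=U^\top W V\in\mathbb{R}^{r\times r},\]
this identity becomes $W=HV^\top+UK^\top-UZV^\top$, with $\cP_U W=UK^\top$, $W\cP_V=HV^\top$, and $\cP_U W\cP_V=UZV^\top$.

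The crux is to check that $W\in\cW$ forces $(H,K,Z)\in\mathcal H\times\mathcal K\times\mathcal Z$. Expanding $W=\sum_{i,j}W_{ij}\,e_ie_j^\top$, with $\sum_{i,j}|W_{ij}|=1$, linearity gives
\[H=\sum_{i,j}W_{ij}\,e_i(V^\top e_j)^\top,\quad K=\sum_{i,j}W_{ij}\,e_j(U^\top e_i)^\top,\quad Z=\sum_{i,j}W_{ij}\,(U^\top e_i)(V^\top e_j)^\top.\]
Each summand is rank one, and by incoherence \eqref{eq:incoh} its norm is controlled: $\|e_i(V^\top e_j)^\top\|_{2,1}=\|V^\top e_j\|_2\le\sqrt{\mu r/n}$, $\|e_j(U^\top e_i)^\top\|_{2,1}=\|U^\top e_i\|_2\le\sqrt{\mu r/m}$, and $\|(U^\top e_i)(V^\top e_j)^\top\|_2=\|U^\top e_i\|_2\|V^\top e_j\|_2\le\mu r/\sqrt{mn}$. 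The triangle inequality together with $\sum_{i,j}|W_{ij}|=1$ then yields $\|H\|_{2,1}\le\sqrt{\mu r/n}$, $\|K\|_{2,1}\le\sqrt{\mu r/m}$, and $\|Z\|_2\le\mu r/\sqrt{mn}$, i.e.\ $H\in\mathcal H$, $K\in\mathcal K$, $Z\in\mathcal Z$.

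Next I would establish sub-additivity of $\|\cdot\|_\infty$ along this decomposition. For $W,W'\in\cW$ with triples $(H,K,Z)$, $(H',K',Z')$ one has $W-W'=(H-H')V^\top+U(K-K')^\top-U(Z-Z')V^\top$, and applying Cauchy--Schwarz entrywise (using $\|A\|_{2,\infty}=\max_i\|A^\top e_i\|_2$ and $|(U^\top e_i)^\top B(V^\top e_j)|\le\|U^\top e_i\|_2\|B\|_{\mathrm{op}}\|V^\top e_j\|_2$, combined with \eqref{eq:incoh}) gives
\[\|W-W'\|_\infty\le\sqrt{\tfrac{\mu r}{n}}\,\|H-H'\|_{2,\infty}+\sqrt{\tfrac{\mu r}{m}}\,\|K-K'\|_{2,\infty}+\tfrac{\mu r}{\sqrt{mn}}\,\|Z-Z'\|_{\mathrm{op}}.\]
Finally, let $\mathcal N_{\mathcal H}$, $\mathcal N_{\mathcal K}$, $\mathcal N_{\mathcal Z}$ be minimal nets of $\mathcal H$ in $d_{2,\infty}$, of $\mathcal K$ in $d_{2,\infty}$, and of $\mathcal Z$ in $d_{\mathrm{op}}$ at scales $\tfrac13\sqrt{n/(\mu r)}\,\varepsilon$, $\tfrac13\sqrt{m/(\mu r)}\,\varepsilon$, and $\tfrac13\tfrac{\sqrt{mn}}{\mu r}\,\varepsilon$, respectively. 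Given $W\in\cW$ with triple $(H,K,Z)$, picking $\hat H\in\mathcal N_{\mathcal H}$, $\hat K\in\mathcal N_{\mathcal K}$, $\hat Z\in\mathcal N_{\mathcal Z}$ within these scales and forming $\hat W:=\hat HV^\top+U\hat K^\top-U\hat ZV^\top$, the displayed bound gives $\|W-\hat W\|_\infty\le\varepsilon$; hence $\{\hat W\}$ is an $\varepsilon$-net of $\cW$ and $N(\cW,d_\infty,\varepsilon)\le|\mathcal N_{\mathcal H}|\,|\mathcal N_{\mathcal K}|\,|\mathcal N_{\mathcal Z}|$, and taking logarithms yields the asserted inequality.

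The main obstacle is the second step: translating the unit $\ell_1$-normalization and incoherence of $W$ into the correct dimension-dependent norm bounds on its three components $H=WV$, $K=W^\top U$, $Z=U^\top WV$, with the factors $\sqrt{\mu r/n}$, $\sqrt{\mu r/m}$, $\mu r/\sqrt{mn}$ landing in the right places so that the rescaled net radii match the statement. The decomposition identity, the entrywise $\ell_\infty$ bound, and the net-combination argument are then routine bookkeeping.
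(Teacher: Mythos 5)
Your proposal is correct and follows essentially the same route as the paper: the identical decomposition $W=WV\,V^\top+U\,U^\top W-U\,U^\top WV\,V^\top$, the same incoherence-based norm bounds placing $(H,K,Z)$ in $\mathcal H\times\mathcal K\times\mathcal Z$, the same entrywise Cauchy--Schwarz estimate for $\|W-W'\|_\infty$, and the same combination of the three rescaled nets.
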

\begin{proof}
    For any $W\in \cW$, we have the following decomposition
    \begin{equation*}
        W=\underbrace{WV}_{:=H}V^{\top}+U\underbrace{U^{\top}W}_{:=K^{\top}} -U\underbrace{U^{\top}WV}_{:=Z}V^{\top}.
    \end{equation*}
    We have $H\in \mathcal{H}$ since
    \begin{equation*}
        \norm{H}_{2, 1}=\sum_{i=1}^{m}\norm{W_{i,:}V}_2\leq \sum_{i=1}^{m}\sum_{j=1}^{n}|W_{i, j}|\norm{e_j^{\top}V}_2\leq \sqrt{\frac{\mu r}{n}}\norm{W}_1\leq \sqrt{\frac{\mu r}{n}}. 
    \end{equation*}
    Similarly, we also have $K\in \mathcal{K}$. For $Z$, we have $Z\in \mathcal{Z}$ since
    \begin{equation*}
        \norm{Z}_2=\norm{\sum_{i=1}^{m}\sum_{j=1}^{n}W_{i,j}U_{i, :}^{\top}V_{j,:}}_2\leq \sum_{i=1}^{m}\sum_{j=1}^{n}|W_{i,j}|\norm{U_{i, :}}_2\norm{V_{j,:}}_2\leq \frac{\mu r}{\sqrt{mn}}\norm{W}_1\leq \frac{\mu r}{\sqrt{mn}}.
    \end{equation*}
    To proceed, let $\cH_{\varepsilon_1}$ be an $\varepsilon_1$-net of $\cH$ under the $d_{2,\infty}$-metric, with $\varepsilon_1=\frac{1}{3}\sqrt{\frac{n}{\mu r}}\varepsilon$. Similarly, let $\cK_{\varepsilon_2}$ be an $\varepsilon_2$-net of $\cK$ under $d_{2,\infty}$ with $\varepsilon_2=\frac{1}{3}\sqrt{\frac{m}{\mu r}}\varepsilon$, and let $\cZ_{\varepsilon_3}$ be an $\varepsilon_3$-net of $\cZ$ under the operator norm with $\varepsilon_3=\frac{\sqrt{mn}}{3\mu r}\varepsilon$.  For any $W\in\cW$ with decomposition $W=HV^\top+UK^\top-UZV^\top$, let $H'$, $K'$, and $Z'$ denote the closest elements to $H$, $K$, and $Z$ in the covering nets $\cH_{\varepsilon_1}$, $\cK_{\varepsilon_2}$, and $\cZ_{\varepsilon_3}$, respectively. We also denote $W'=H'+K'^{\top}-Z'$. Then, we have
    \begin{equation*}
        \norm{W-W'}_{\infty}\leq \norm{(H-H')V^{\top}}_{\infty}+\norm{U(K-K')^{\top}}_{\infty}+\norm{U(Z-Z')V^{\top}}_{\infty}.
    \end{equation*}
    For the first term, we have
    \begin{equation*}
        \norm{(H-H')V^{\top}}_{\infty}=\max_{i, j}|\ip{(H-H')_{i,:}}{V_{j, :}}|\leq \sqrt{\frac{\mu r}{n}}\norm{H-H'}_{2, \infty}\leq \frac{1}{3}\varepsilon.
    \end{equation*}
    Similarly, we have $\norm{U(K-K')^{\top}}_{\infty}\leq \sqrt{\frac{\mu r}{m}}\norm{K-K'}_{2, \infty}\leq \frac{1}{3}\varepsilon$. Lastly, we have
    \begin{equation*}
        \norm{U(Z-Z')V^{\top}}_{\infty}=\max_{i, j}|\ip{U_{i,:}(Z-Z')}{V_{j, :}}|\leq \frac{\mu r}{\sqrt{mn}}\norm{Z-Z'}_{\mathrm{op}}\leq \frac{1}{3}\varepsilon.
    \end{equation*}
    Combining the three bounds gives $\|W-W'\|_\infty\le \varepsilon$. Thus, the set $\cW_{\varepsilon}=\{W'=H'+K'^{\top}-Z'\}$ is an $\varepsilon$-net of $\cW$ with respect to the $d_\infty$-metric. This completes the proof.
\end{proof}

Equipped with this lemma, we have
\begin{equation*}
    \begin{aligned}
        \int_0^{\infty}\log\left(N(\cW, d_\infty, \varepsilon)\right)\operatorname{d}\!\varepsilon&\leq  3\sqrt{\frac{\mu r}{n}}\underbrace{\int_0^{\infty}\log\left(N(\mathcal{H}, d_{2, \infty}, \varepsilon)\right)\operatorname{d}\!\varepsilon}_{(\rom{1})}\\
        &\quad + 3\sqrt{\frac{\mu r}{m}}\underbrace{\int_0^{\infty}\log\left(N(\mathcal{K}, d_{2, \infty}, \varepsilon)\right)\operatorname{d}\!\varepsilon}_{(\rom{2})}\\
        &\quad + 3\frac{\mu r}{\sqrt{mn}}\underbrace{\int_0^{\infty}\log\left(N(\mathcal{Z}, d_{\mathrm{op}}, \varepsilon)\right)\operatorname{d}\!\varepsilon}_{(\rom{3})}.
    \end{aligned}
\end{equation*}
We first control the first term $(\rom{1})$. To this goal, we estimate the metric entropy $\log\left(N(\mathcal{H}, d_{2, \infty}, \varepsilon)\right)$ in two regimes.
\begin{lemma}
    For any $0 < \varepsilon \le \sqrt{\mu r / n}$, the metric entropy of $\mathcal H$ under the $d_{2,\infty}$ metric admits the following bounds
    \begin{equation*}
        \log\left(N(\mathcal{H}, d_{2, \infty}, \varepsilon)\right)\lesssim \begin{cases}
mr\log(1/\varepsilon) & \text{(volume bound)}; \\
\sqrt{\frac{\mu r^3}{n\varepsilon^2}}\log(1/\varepsilon)  & \text{(sparsity bound)}.
\end{cases} 
    \end{equation*}
    
\end{lemma}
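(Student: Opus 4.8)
The plan is to estimate the covering numbers $N(\mathcal H, d_{2,\infty},\varepsilon)$ directly from two complementary perspectives, and then take the better of the two bounds in each range of $\varepsilon$. Recall that $\mathcal H = \{H\in\mathbb R^{m\times r}:\|H\|_{2,1}\le\sqrt{\mu r/n}\}$, where $\|H\|_{2,1}=\sum_{i=1}^m\|H_{i,:}\|_2$, and the metric is $d_{2,\infty}(H,H')=\max_i\|H_{i,:}-H'_{i,:}\|_2$. The first (volume) bound is the generic one: since $\mathcal H$ lives in an $mr$-dimensional space and, under the $d_{2,\infty}$ metric, is contained in a ball of radius $R:=\sqrt{\mu r/n}$ (because $\|H\|_{2,\infty}\le\|H\|_{2,1}\le R$), the standard volumetric estimate gives $N(\mathcal H,d_{2,\infty},\varepsilon)\le (3R/\varepsilon)^{mr}$, hence $\log N\lesssim mr\log(R/\varepsilon)\lesssim mr\log(1/\varepsilon)$ once $\varepsilon\le R$. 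This handles the regime where $\varepsilon$ is not too small.

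The second (sparsity) bound exploits the $\|\cdot\|_{2,1}$ constraint, which forces the row-norm vector $(\|H_{1,:}\|_2,\dots,\|H_{m,:}\|_2)$ to lie in a scaled simplex and therefore to be effectively sparse. I would use a Maurey-type empirical approximation argument: any $H\in\mathcal H$ can be written as $R$ times a convex combination of the "atoms" $\{R^{-1}\|H_{i,:}\|_2\cdot(\text{unit-row matrices})\}$ — more concretely, with $w_i:=\|H_{i,:}\|_2/R\ge 0$, $\sum_i w_i\le 1$, one views $H$ as lying in the convex hull of the sets $R\cdot(\text{matrix supported on a single row, with that row a vector of }\ell_2\text{-norm}\le 1)$. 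Sampling $N$ i.i.d. rows from this mixture and averaging yields, by the standard argument (e.g. the empirical method of Maurey, cf. the auxiliary lemmas), an approximation $\tilde H$ with $\mathbb E\,d_{2,\infty}(H,\tilde H)^2\lesssim R^2/N$ — here the $\max_i$ in $d_{2,\infty}$ costs only a $\log m$ factor, and more importantly the number of distinct averages of $N$ rows, each itself chosen from an $\varepsilon'$-net of the unit $\ell_2$-ball in $\mathbb R^r$ (which has $\lesssim (1/\varepsilon')^r$ points) times a choice of which of the $m$ rows it lands in, is at most $\big(m\cdot(C/\varepsilon')^r\big)^N$. Choosing $N\asymp R^2/\varepsilon^2=\mu r/(n\varepsilon^2)$ to make the approximation error $\lesssim\varepsilon$, and $\varepsilon'\asymp\varepsilon$, gives $\log N(\mathcal H,d_{2,\infty},\varepsilon)\lesssim N\cdot\big(\log m + r\log(1/\varepsilon)\big)\lesssim \frac{\mu r}{n\varepsilon^2}\cdot r\log(1/\varepsilon) = \sqrt{\frac{\mu^2 r^4}{n^2\varepsilon^4}}\log(1/\varepsilon)$; a slightly more careful bookkeeping (tracking that only the $\ell_2$-norm budget, not the ambient dimension, enters) sharpens the exponent to the claimed $\sqrt{\mu r^3/(n\varepsilon^2)}\,\log(1/\varepsilon)$.

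The main obstacle is getting the sparsity bound with the stated exponent rather than a cruder one: a naive Maurey argument gives $N\asymp R^2/\varepsilon^2$ samples each carrying $\log$-cost $r\log(1/\varepsilon)$, producing $\mu r^2/(n\varepsilon^2)\cdot\log(1/\varepsilon)$, which is larger than $\sqrt{\mu r^3/(n\varepsilon^2)}\,\log(1/\varepsilon)$ in the relevant range. Closing this gap requires being more economical — for instance, only discretizing the directions of the heavy rows to precision $\varepsilon/\sqrt{\text{(number of active rows)}}$ and noting that the number of rows with $\|H_{i,:}\|_2\gtrsim\delta$ is at most $R/\delta$, then optimizing the truncation level $\delta$ — or invoking a known covering-number estimate for $\ell_1$-type balls in the mixed $(2,\infty)$ norm (a mixed-norm analogue of the classical bound $\log N(\{x:\|x\|_1\le 1\},\|\cdot\|_\infty,\varepsilon)\lesssim \varepsilon^{-2}\log(d)\log(1/\varepsilon)$). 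I would first establish the clean volume bound, then carry out the truncation-and-count argument for the sparsity bound, checking at the end that both are valid for all $0<\varepsilon\le\sqrt{\mu r/n}$ and that their minimum is what gets integrated in $(\rom{1})$.
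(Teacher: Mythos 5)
Your volume bound is correct and is essentially the paper's: covering each of the $m$ rows by an $\varepsilon$-net of the $\ell_2$-ball of radius $R:=\sqrt{\mu r/n}$ in $\mathbb{R}^r$ gives $(3R/\varepsilon)^{mr}$, which is the same as your ambient volumetric estimate.

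The sparsity bound, however, has a genuine gap as your primary route stands. The Maurey argument you set up gives $N\asymp (R/\varepsilon)^2$ samples, each costing $r\log(1/\varepsilon)$ bits, hence $\log N(\mathcal H,d_{2,\infty},\varepsilon)\lesssim \frac{\mu r^2}{n\varepsilon^2}\log(1/\varepsilon)$ --- the \emph{square} of the target (up to the log), since $R/\varepsilon\ge 1$ throughout the assumed range $\varepsilon\le R$. Your claim that ``slightly more careful bookkeeping'' sharpens this to $\sqrt{\mu r^3/(n\varepsilon^2)}\log(1/\varepsilon)$ is not substantiated and I do not believe it can be: the quadratic-in-$1/\varepsilon$ sample count is intrinsic to Maurey-type averaging, whereas the target bound is linear in $1/\varepsilon$. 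The correct mechanism --- which you do name, but only as a fallback and without executing it --- is the one the paper uses, and it works because the target metric is a $\max$-over-rows metric rather than an averaged one. Concretely: since $\sum_i\|H_{i,:}\|_2\le R$, at most $s\le R/\varepsilon=\sqrt{\mu r/(n\varepsilon^2)}$ rows have $\ell_2$-norm exceeding $\varepsilon$; all remaining rows may be replaced by zero at $d_{2,\infty}$-cost at most $\varepsilon$ (this is where the $\infty$-type metric is essential --- no error accumulates across rows, so no optimization over a truncation level $\delta$ and no $\varepsilon/\sqrt{s}$ precision is needed, contrary to your sketch); one then pays $\binom{m}{s}\le (em/s)^s$ for the choice of support and $(3R/\varepsilon)^{rs}$ to cover the heavy rows, giving $\log N\le s\log(em/s)+rs\log(1/\varepsilon)\lesssim\sqrt{\mu r^3/(n\varepsilon^2)}\log(1/\varepsilon)$. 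So the right idea is present in your proposal, but the argument you actually develop does not reach the stated exponent, and the step you rely on to close the gap (``more careful bookkeeping'' of Maurey) would fail.
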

\begin{proof}
We prove the above two bounds separately.

\medskip
\noindent\textbf{Volume bound.}
Note that $\|H\|_{2,\infty} = \max_{1\leq i\leq m} \|H_{i,:}\| \le \sqrt{\mu r/n}$.
    Hence, it suffices to construct $m$ $\varepsilon$-nets for each row in $d_{2}$-metric and the entropy can be upper-bounded by
    \begin{equation*}
        \begin{aligned}
            \log\left(N(\mathcal{H}, d_{2, \infty}, \varepsilon)\right)&\leq m \log\left(N\left(\mathbb{B}_r\left(\sqrt{\mu r/n}\right), d_{2}, \varepsilon\right)\right)\\
            &\leq m\log\left(\left(3\sqrt{\frac{\mu r}{n\varepsilon^2}}\right)^r\right)\\
            &\leq mr \log(1/\varepsilon).
        \end{aligned}
    \end{equation*}
    Here, in the last inequality, we use the fact that $\mu \leq n/r$ by the definition of incoherence.
    
\medskip
\noindent\textbf{Sparsity bound.}
In this case, we exploit the row sparsity. To this end, we define
\begin{equation*}
    S(H) := \bigl\{i : \|H_{i,:}\|_2 > \varepsilon\bigr\}.
\end{equation*}
Since $\norm{H}_{2, 1}\leq \sqrt{\mu r/n}$, we have
\begin{equation*}
    s := |S(H)| \le  \frac{\sqrt{\mu r/n}}{\varepsilon}
    = \sqrt{\frac{\mu r}{n\varepsilon^2}}.
\end{equation*}
Then, we cover $\mathcal{H}$ in three steps: for any $H\in \mathcal{H}$, we cover the large rows on $S(H)$, and note that the remaining rows are already small.

\medskip
\noindent\emph{Substep 4.1: Choose the large-row support.}
The number of subsets $S\subset\{1,\dots,m\}$ with $|S|=s$ satisfies
\begin{equation*}
    \binom{m}{s} \le \left(\frac{em}{s}\right)^s.
\end{equation*}

\medskip
\noindent\emph{Substep 4.2: Cover the large rows.}
For each $i\in S(H)$ we have $\|H_{i,:}\|_2\le \sqrt{\mu r/n}$, and we only need accuracy $\varepsilon$ in $\|\cdot\|_2$ for each such row. As above,
\begin{equation*}
    N\bigl(B_r(\sqrt{\mu r/n}), d_2, \varepsilon\bigr) 
    \le \left(3\sqrt{\frac{\mu r}{n\varepsilon^2}}\right)^r.
\end{equation*}

\medskip
\noindent\emph{Substep 4.3: Small rows.}
For $i\notin S(H)$ we have $\|H_{i,:}\|_2\le \varepsilon$. Thus, each such row lies within distance $\varepsilon$ (in $\|\cdot\|_2$) of the zero vector, so we can approximate all these rows by $0$ without increasing $d_{2,\infty}$ by more than $\varepsilon$. Hence, they do not contribute to the covering number.

\medskip
Putting Substeps 4.1 and 4.2 together, we obtain
\begin{equation*}
    N(\mathcal{H}, d_{2,\infty}, \varepsilon)
    \le \binom{m}{s}\left(3\sqrt{\frac{\mu r}{n\varepsilon^2}}\right)^{rs},
\end{equation*}
so
\begin{equation*}
    \begin{aligned}
        \log N(\mathcal{H}, d_{2,\infty}, \varepsilon)
    \le s\log\left(\frac{em}{s}\right) + rs\log\left(\frac{1}{\varepsilon}\right)\leq \sqrt{\frac{\mu r^3}{n\varepsilon^2}}\log\left(\frac{1}{\varepsilon}\right).
    \end{aligned}
\end{equation*}
This completes the proof.
\end{proof}

Now we are ready to control the first term $(\rom{1})$. Upon setting $\varepsilon_0 = \sqrt{\frac{\mu r}{m^2 n}}$, the integral can be split at $\varepsilon_0$ as follows
\begin{equation*}
    \begin{aligned}
        (\rom{1})&=\int_0^{\sqrt{\frac{\mu r}{n}}}\log\left(N(\mathcal{H}, d_{2, \infty}, \varepsilon)\right)\operatorname{d}\!\varepsilon\\
        &\lesssim \int_0^{\varepsilon_0} mr\log(1/\varepsilon)\operatorname{d}\!\varepsilon+\int_{\varepsilon_0}^{\sqrt{\frac{\mu r}{n}}} \sqrt{\frac{\mu r^3}{n\varepsilon^2}}\log(1/\varepsilon)\operatorname{d}\!\varepsilon\\
        &=mr\left(\varepsilon_0\log(1/\varepsilon_0)+\varepsilon_0\right)+\frac{1}{2}\sqrt{\frac{\mu r^3}{n}}\left(\log^2(1/\varepsilon_0)-\frac{1}{4}\log^2\left(\frac{n}{\mu r}\right)\right)\\
        &\lesssim\log^2(mn)\sqrt{\frac{\mu r^3}{n}}.
    \end{aligned}
\end{equation*}
Similarly, we can upper-bound the second term $(\rom{2})$
\begin{equation*}
    (\rom{2})\leq C\log^2(mn)\sqrt{\frac{\mu r^3}{m}}.
\end{equation*}
Lastly, we control the third term $(\rom{3})$. 
First, note that for any matrix $A \in \mathbb{R}^{r \times r}$, we have $\norm{A}_{\mathrm{op}} \leq \norm{A}_2$. Consequently, any $\varepsilon$-cover of the set $\mathcal{Z}$ with respect to the Frobenius metric $d_2$ is also an $\varepsilon$-cover with respect to the operator metric $d_{\mathrm{op}}$. This implies that
\begin{align*}
N(\mathcal{Z}, d_{\mathrm{op}}, \varepsilon) \leq N(\mathcal{Z}, d_2, \varepsilon).
\end{align*}
Using the standard volume bound for the covering number of a Euclidean ball, we have:
\begin{align*}
N(\mathcal{Z}, d_2, \varepsilon) \leq \left(\frac{3R}{\varepsilon}\right)^{r^2},
\end{align*}
where $R=\frac{\mu r}{\sqrt{mn}}$.
Thus, we bound the third term $(\rom{3})$ as follows:
\begin{align*}
(\rom{3}) \leq \int_0^{R} \log \left[ \left(\frac{3R}{\varepsilon}\right)^{r^2} \right] \operatorname{d}\!\varepsilon = \frac{\mu r^3}{\sqrt{mn}} \int_0^{1} \ln \left(\frac{3}{\varepsilon}\right) \operatorname{d}\!\varepsilon\leq C\frac{\mu r^3}{\sqrt{mn}}.
\end{align*}
Therefore, putting everything together yields
\begin{equation*}
    \begin{aligned}
        \int_0^{\infty}\log\left(N(\cW, d_\infty, \varepsilon)\right)\operatorname{d}\!\varepsilon&\leq  \sqrt{\frac{\mu r}{n}}\cdot(\rom{1})+\sqrt{\frac{\mu r}{m}}\cdot(\rom{2})+\frac{\mu r}{\sqrt{mn}}\cdot (\rom{3})\\
        &\leq C\log^2(mn)\mu r^2\left(\frac{1}{m}+\frac{1}{n}\right)+C\frac{\mu^2r^4}{mn}\\
        &\leq C\log^2(mn)\mu r^2\left(\frac{1}{m}+\frac{1}{n}\right),
    \end{aligned}
\end{equation*}
provided that $r=O\Big(\sqrt{\frac{\min\{m, n\}}{\mu\log^2(mn)}}\Big)$.
This implies
\begin{equation*}
    \gamma_1(\cW,d_{\infty}) =C\log^2(mn)\mu r^2\frac{1}{\min\{m, n\}}.
\end{equation*}

\noindent\textit{Step 5: Putting it together.}
Finally, applying \Cref{thm:mixed-tail}, for any fixed $W_0\in \cW$, we have
\begin{equation*}%
    \begin{aligned}
        \bE  \left[\sup_{W\in\cW} X_W\right]
 &=\bE  \left[\sup_{W\in\cW} X_W-X_{W_0}\right]\\
 &\leq \bE  \left[\sup_{W\in\cW} \left|X_W-X_{W_0}\right|\right]\\
 &\le  C\bigl(\sqrt{p}\gamma_2(\cW,d_2)+\gamma_1(\cW,d_{1})\bigr)+2  \sup_{W \in \cW} \bE \left[|X_W - X_{W_0}|\right]\\
 &\leq C\left(\sqrt{p\log(mn)}+\mu r^2\left(\frac{1}{m}+\frac{1}{n}\right)\log^2(mn)\right),
    \end{aligned}
\end{equation*}
for an absolute constant $C>0$. Here, in the last inequality, we use the following estimation 
\begin{equation*}
        \bE \left[|X_W - X_{W_0}|\right]\leq 2\bE \left[|X_W|\right]\leq 2\left(\Var(X_W)\right)^{1/2}\leq 2\sqrt{p}\norm{W}_2\leq 2\sqrt{p}.
\end{equation*}
On the other hand, note that $\Delta_2(\cW) \le 2 \sqrt{\mu r \left( \frac{1}{m} + \frac{1}{n} \right)}$ and $\Delta_\infty(\cW) \le 2 \mu r \left( \frac{1}{m} + \frac{1}{n} \right)$. Hence, applying \Cref{thm:mixed-tail} yields that 
\begin{equation*}
    \begin{aligned}
        \bP\bigg(\sup_{W\in\cW} X_W &\geq C\left(\sqrt{p\log(mn)}+\log^2(mn)\mu r^2\frac{1}{\min\{m, n\}}\right)\\
        &\qquad+c\left(\sqrt{ \frac{\mu ru}{\min\{m,n\}} } +\frac{\mu ru}{\min\{m,n\}}\right)\!\bigg) \leq e^{-u}.
    \end{aligned}
\end{equation*}
Upon setting $u=Cr\log^2(mn)$, with probability at least $1-\exp\{-Cr\log^2(mn)\}$, we have
    \begin{equation*}
        \sup_{W\in\cW} X_W\leq C\left(\sqrt{p\log(mn)}+\frac{\log^2(mn)\mu r^2}{\min\{m, n\}}+\log(mn)r\sqrt{\frac{\mu}{\min\{m, n\}}}\right).
    \end{equation*}
This completes the proof.
\subsection{Determination of second-order optimality}\label{sec:second_order}
In this section, we investigate second-order optimality of solutions in $\mathcal L$. When the rank of the underlying low-rank component $L$ equals the factorization rank $k$, any $(X^{\star},Y^{\star})\in \mathcal L$ is in fact a sharp local minimum: the objective value grows at least linearly with the distance to $\mathcal L$. This is the object of Theorem \ref{thm:localmin}, which leverages the restricted $\ell_1$ operator-norm estimate for $\mathcal P_{\Omega}$ established in \Cref{thm::first-order-optimality}. Given $A\in \mathbb R^{m\times n}$ and $r\le \rank(A)$, we denote by $\sigma_r(A)$ the $r$-th largest singular value of $A$.
\begin{theorem}\label{thm:localmin}
	Assume $k = r$ and that there exists $\epsilon>0$ such that 
    \begin{equation}\label{eq:restricted_bound_sharp}
	\|\mathcal{P}_\Omega(W)\|_1 \leq \frac{1-\epsilon}{2} \|W\|_1,\quad\forall W\in T.        
    \end{equation}
	Then for any $(X^{\star},Y^{\star})\in \mathcal L$, there exists $\rho > 0$ such that
	\begin{align*}
		f(X,Y) - f(X^{\star},Y^{\star}) \ge \frac{\epsilon}{8}\min\{\sigma_{r}(X^{\star}),\sigma_{r}(Y^{\star})\}\, d((X,Y),\mathcal L)
	\end{align*}
	for all $(X,Y)\in B((X^{\star},Y^{\star}),\rho)$.
\end{theorem}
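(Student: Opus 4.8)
The plan is to bound $f(X,Y)-f(X^\star,Y^\star)$ from below by the linear part of the perturbation and then combine sharpness of that linear part with the hypothesis \eqref{eq:restricted_bound_sharp}. Since $k=r$, the set $\mathcal L=\{(X',Y'):X'Y'^\top=L\}$ is closed, so I pick $(\bar X,\bar Y)\in\mathcal P_{\mathcal L}(X,Y)$ and set $\bar H:=X-\bar X$, $\bar K:=Y-\bar Y$, so that $d((X,Y),\mathcal L)=\|(\bar H,\bar K)\|_2$. Because $(\bar X,\bar Y)\in\mathcal L$ satisfies $\col(\bar X)=\col(U)$ and $\col(\bar Y)=\col(V)$, I decompose
\[
\Delta:=XY^\top-L=XY^\top-\bar X\bar Y^\top=\underbrace{\bar H\bar Y^\top+\bar X\bar K^\top}_{=:\,\Delta_T\in T}\;+\;\underbrace{\bar H\bar K^\top}_{=:\,R}.
\]
Using $f(X^\star,Y^\star)=\|S\|_1$ and that $S$ is supported on $\Omega$, the reverse triangle inequality on the $\Omega$-block gives $f(X,Y)-f(X^\star,Y^\star)\ge\|\Delta\|_1-2\|\mathcal P_\Omega(\Delta)\|_1$. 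Applying \eqref{eq:restricted_bound_sharp} to $\Delta_T\in T$, together with $\|\mathcal P_\Omega(R)\|_1\le\|R\|_1$ and $\|\Delta\|_1\ge\|\Delta_T\|_1-\|R\|_1$, yields
\[
f(X,Y)-f(X^\star,Y^\star)\ \ge\ \epsilon\|\Delta_T\|_1-3\|R\|_1\ \ge\ \epsilon\|\Delta_T\|_2-3\|R\|_1 .
\]

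The heart of the proof is the sharpness estimate $\|\Delta_T\|_2\ge\min\{\sigma_r(\bar X),\sigma_r(\bar Y)\}\,d((X,Y),\mathcal L)$. By first-order optimality of the metric projection, $(\bar H,\bar K)$ is orthogonal to the tangent space $\mathcal T_{(\bar X,\bar Y)}\mathcal L=\{(\bar X E,-\bar Y E^\top):E\in\mathbb R^{r\times r}\}$. Writing $\bar X=U\bar A$, $\bar Y=V\bar B$ with $\bar A,\bar B\in\mathbb R^{r\times r}$ invertible, and splitting $\bar H=U\bar H_1+U_\perp\bar H_2$, $\bar K=V\bar K_1+V_\perp\bar K_2$, a short computation shows the orthogonality condition is exactly $\bar K_1=\bar B^{-\top}\bar H_1^\top\bar A$ (with $\bar H_2,\bar K_2$ unconstrained). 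Since the blocks $U(\cdot)V^\top$, $U_\perp(\cdot)V^\top$, $U(\cdot)V_\perp^\top$ are mutually Frobenius-orthogonal,
\[
\|\Delta_T\|_2^2=\|\bar H_1\bar B^\top+\bar A\bar K_1^\top\|_2^2+\|\bar H_2\bar B^\top\|_2^2+\|\bar A\bar K_2^\top\|_2^2 .
\]
The last two terms are at least $\sigma_r(\bar Y)^2\|\bar H_2\|_2^2$ and $\sigma_r(\bar X)^2\|\bar K_2\|_2^2$. For the first term, substituting $\bar A\bar K_1^\top=(\bar A\bar A^\top)\bar H_1\bar B^{-1}$ and cycling the trace shows the cross term equals $\|\bar A^\top\bar H_1\|_2^2\ge0$, so the first term is at least $\|\bar H_1\bar B^\top\|_2^2+\|\bar A\bar K_1^\top\|_2^2\ge\sigma_r(\bar Y)^2\|\bar H_1\|_2^2+\sigma_r(\bar X)^2\|\bar K_1\|_2^2$. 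Summing the four pieces gives $\|\Delta_T\|_2^2\ge\sigma_r(\bar Y)^2\|\bar H\|_2^2+\sigma_r(\bar X)^2\|\bar K\|_2^2\ge\min\{\sigma_r(\bar X),\sigma_r(\bar Y)\}^2\,d((X,Y),\mathcal L)^2$.

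To finish, I bound the quadratic remainder: $\|R\|_1\le\sqrt{mn}\,\|R\|_2\le\sqrt{mn}\,\|\bar H\|_2\|\bar K\|_2\le\sqrt{mn}\,d((X,Y),\mathcal L)^2$. On $B((X^\star,Y^\star),\rho)$ one has $d((X,Y),\mathcal L)\le\rho$ and $\|\bar X-X^\star\|_2,\|\bar Y-Y^\star\|_2\le2\rho$, so by continuity of singular values $\sigma_r(\bar X)\ge\tfrac12\sigma_r(X^\star)$ and $\sigma_r(\bar Y)\ge\tfrac12\sigma_r(Y^\star)$ once $\rho$ is small. Plugging the previous two displays together gives, for such $\rho$,
\[
f(X,Y)-f(X^\star,Y^\star)\ \ge\ \Big(\tfrac{\epsilon}{2}\min\{\sigma_r(X^\star),\sigma_r(Y^\star)\}-3\sqrt{mn}\,\rho\Big)\,d((X,Y),\mathcal L),
\]
and shrinking $\rho$ so that $3\sqrt{mn}\,\rho\le\tfrac{3\epsilon}{8}\min\{\sigma_r(X^\star),\sigma_r(Y^\star)\}$ yields the claimed bound.

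I expect the main obstacle to be the sharpness estimate of the second paragraph: correctly identifying the normal space of $\mathcal L$ at the projection point in coordinates adapted to $\col(U),\col(V)$ and their complements, and extracting the factor $\min\{\sigma_r(\bar X),\sigma_r(\bar Y)\}$. The crucial point there is that the seemingly dangerous cross term $\langle\bar H_1\bar B^\top,(\bar A\bar A^\top)\bar H_1\bar B^{-1}\rangle$ reduces to a nonnegative trace and can simply be dropped; the rest of the argument is triangle inequalities and a continuity/compactness step to select $\rho$.
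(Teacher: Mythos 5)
Your proposal is correct, and while it follows the same overall skeleton as the paper's proof (project onto $\mathcal L$, split $\Delta=XY^\top-L$ into a part $\Delta_T\in T$ linear in $(\bar H,\bar K)$ and a quadratic remainder $R=\bar H\bar K^\top$, lower-bound $\|\Delta_T\|_2$ by $\min\{\sigma_r,\sigma_r\}\,d$ via orthogonality to the tangent fiber, and absorb $\|R\|_1\le\sqrt{mn}\,d^2$), it differs in two substantive ways. First, to get the key inequality $f(X,Y)-f(X^\star,Y^\star)\ge \epsilon\|\Delta_T\|_1-3\|R\|_1$ you use only the reverse triangle inequality on the $\Omega$-block together with \eqref{eq:restricted_bound_sharp} applied to $\Delta_T$; the paper instead routes this step through the dual certificate $\Lambda\in-\sign(S)$ of Lemma~\ref{lemma:interior} (with $\Lambda V=\Lambda^\top U=0$ and $\|\mathcal{P}_{\Omega^c}(\Lambda)\|_\infty\le 1-\epsilon$), arriving at the slightly weaker $\tfrac{\epsilon}{2}\|\Delta_1\|_1-\|\Delta_2\|_1$. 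Your version is more elementary, self-contained, and gives a better constant on the linear term. Second, for the singular-value lower bound at the projection point you invoke Weyl's inequality together with $\|(\bar X,\bar Y)-(X^\star,Y^\star)\|_2\le 2\rho$, which lets you take $(\bar X,\bar Y)$ to be \emph{any} nearest point; the paper instead appeals to the tubular neighborhood theorem to make the projection single-valued and continuous, which your argument does not need. The sharpness estimate itself is the same in substance — your explicit coordinates $\bar X=U\bar A$, $\bar Y=V\bar B$ with the normal-space relation $\bar K_1=\bar B^{-\top}\bar H_1^\top\bar A$ reproduce the paper's observation that the cross term equals $\|\bar A^\top\bar H_1\|_2^2\ge 0$ (the paper's displayed orthogonality relation has a harmless transpose slip that your coordinate computation gets right). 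All constants check out: $\epsilon\cdot\tfrac12\sigma_\star-3\sqrt{mn}\,\rho\ge\tfrac{\epsilon}{8}\sigma_\star$ after shrinking $\rho$ as you indicate.
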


\begin{remark}[Implied linear convergence of the subgradient method]
As the objective (\Eqref{eq:obj}) is weakly convex \cite{vial1983strong}, the sharp growth of $f$ guaranteed by Theorem \ref{thm:localmin} implies that the subgradient method with geometrically decaying step sizes converges linearly when initialized near the solution set $\mathcal L$; see, for example, \cite{burke1993weak,davis2018subgradient,li2020nonconvex}. This corroborates our numerical experiments reported in Figure \ref{fig:error-exact}.
\end{remark}
\begin{proof}
 Fix $(X^{\star},Y^{\star})\in \mathcal L$. As $k = r$, $\mathcal L$ is a smooth embedded manifold. By the tubular neighborhood theorem \cite{lee2003smooth}, there is a neighborhood $\mathcal{N}$ of $(X^{\star},Y^{\star})$ so that the projection
$\mathcal P_{\mathcal L}$ onto $\mathcal L$ is single-valued and continuous on
$\mathcal{N}$. Fix $(X,Y)\in \mathcal N$ and set
\[
(X^\sharp,Y^\sharp):=\mathcal P_{\mathcal L}(X,Y),\qquad H:=X-X^\sharp,\quad K:=Y-Y^\sharp.
\]
Let $G_\sharp:=\{(X^\sharp A, -Y^\sharp A^\top): A\in\mathbb R^{r\times r}\}$ be the tangent 
space to $\mathcal L$ at $(X^\sharp,Y^\sharp)$. Since $(X^\sharp,Y^\sharp)$ minimizes
$\frac12\|(X',Y')-(X,Y)\|_2^2$ over $(X',Y')\in\mathcal L$, first-order optimality yields $(H,K)\in G_\sharp^\perp$. Also,
\begin{equation*}%
d:= d\big((X,Y),\mathcal L\big) = \|(H,K)\|_2.
\end{equation*}
Expand
\[
\Delta:= XY^\top - X^{\star}Y^{\star\top}
= \underbrace{X^\sharp K^\top + H(Y^\sharp)^\top}_{\Delta_1} + \underbrace{HK^\top}_{\Delta_2}.
\]
Note that $\Delta_1\in T$ and $\Delta_2$ is quadratic in $(H,K)$. We have
\begin{align*}
	f(X,Y) - f(X^{\star},Y^{\star}) &= \|XY^{\top} - M\|_1 - \|X^{\star}(Y^{\star})^{\top} - M\|_1\\
	&= \|\Delta - S\|_1 - \|S\|_1\\
    &= \|\mathcal{P}_{\Omega^c}(\Delta)\|_1 + \|\mathcal{P}_{\Omega}(\Delta) - S\|_1 - \|S\|_1\\
	&= \|\mathcal{P}_{\Omega^c}(\Delta)\|_1 + \langle -\sign(S),\mathcal{P}_\Omega(\Delta)\rangle\\
	&\ge \epsilon \|\mathcal{P}_{\Omega^c}(\Delta)\|_1+ \langle  \Lambda,\mathcal{P}_{\Omega^c}(\Delta)\rangle  + \langle -\sign(S),\mathcal{P}_\Omega(\Delta)\rangle\\
	&=\epsilon \|\mathcal{P}_{\Omega^c}(\Delta)\|_1+ \langle  \Lambda,\Delta\rangle,
\end{align*}
where $\Lambda\in -\sign(S)$ satisfies $\|\mathcal{P}_{\Omega^c}(\Lambda)\|_\infty \leq 1- \epsilon$. By \Cref{lemma:interior}, we may assume additionally that $\Lambda Y^\sharp =\Lambda V =  0$ and $\Lambda^{\top} X^\sharp = \Lambda^{\top} U= 0$. Thus,
\begin{align*}
	\langle  \Lambda,\Delta\rangle = \langle  \Lambda,\Delta_1 + \Delta_2\rangle = \langle  \Lambda,\Delta_2\rangle.
\end{align*}
Also,
\begin{align*}
	\|\mathcal{P}_{\Omega^c}(\Delta)\|_1 \geq \|\mathcal{P}_{\Omega^c}(\Delta_1)\|_1 - \|\mathcal{P}_{\Omega^c}(\Delta_2)\|_1,
\end{align*}
where
\begin{align*}
	\|\mathcal{P}_{\Omega^c}(\Delta_1)\|_1 &= \|\Delta_1\|_1 - \|\mathcal{P}_{\Omega}(\Delta_1)\|_1\\
	&\ge \Bigl(1-\frac{1-\epsilon}{2}\Bigr)\|\Delta_1\|_1\\
	&= \frac{1+\epsilon}{2}\|\Delta_1\|_1
\end{align*}
by \Eqref{eq:restricted_bound_sharp}. Combining the above inequalities, we have
\begin{equation}\label{eq:overall}
    \begin{aligned}
f(X,Y)-f(X^\ast,Y^\ast)
&\ge \epsilon \|\mathcal P_{\Omega^c}(\Delta)\|_1 + \langle \Lambda,\Delta\rangle \\
&\ge \frac{\epsilon}{2}\|\Delta_1\|_1 - \epsilon\|\mathcal P_{\Omega^c}(\Delta_2)\|_1
  + \langle \Lambda,\Delta_2\rangle\\
  &\ge \frac{\epsilon}{2}\|\Delta_1\|_1 - \epsilon\|\mathcal P_{\Omega^c}(\Delta_2)\|_1
  -(1-\epsilon)\|\Delta_2\|_1\\
  &\ge \frac{\epsilon}{2}\|\Delta_1\|_1 - \|\Delta_2\|_1.        
    \end{aligned}
\end{equation}

We next obtain an explicit lower bound on $\|\Delta_1\|_1$ in terms of $d$ and $(X^{\star},Y^{\star})$. Since $(H,K)\in G_\sharp^\perp$, we have
\[
0 = \langle (H,K),(X^\sharp A,-Y^\sharp A^\top)\rangle
    = \langle H,X^\sharp A\rangle - \langle K,Y^\sharp A^\top\rangle = \langle H^\top X^\sharp - K^\top Y^\sharp,A\rangle
    \quad
\]
for all $A\in\mathbb R^{r\times r}$. Hence,
\begin{equation}\label{eq:orth}
H^\top X^\sharp = K^\top Y^\sharp.
\end{equation}
We first bound $\|\Delta_1\|_2$ from below. Note that
\begin{align*}
\|\Delta_1\|_2^2
&= \|X^\sharp K^\top + HY^{\sharp\top}\|_2^2 \\
&= \|X^\sharp K^\top\|_2^2 + \|HY^{\sharp\top}\|_2^2
   + 2\langle X^\sharp K^\top,HY^{\sharp\top}\rangle.
\end{align*}
By \Eqref{eq:orth}, the cross term satisfies
\begin{align*}
\langle X^\sharp K^\top,HY^{\sharp\top}\rangle
&= \operatorname{tr}\big((X^\sharp K^\top)^\top HY^{\sharp\top}\big)
 = \operatorname{tr}\big(KX^{\sharp\top} HY^{\sharp\top}\big) \\
&= \operatorname{tr}\big(Y^{\sharp\top} K X^{\sharp\top} H\big)
 = \operatorname{tr}\big((X^{\sharp\top} H)(X^{\sharp\top} H)^\top\big)
 = \|X^{\sharp\top} H\|_2^2 \;\ge\; 0.
\end{align*}
Thus,
\begin{equation*}
\|\Delta_1\|_2^2
 \ge \|X^\sharp K^\top\|_2^2 + \|HY^{\sharp\top}\|_2^2 \ge \sigma_{r}(X^\sharp)^2 \|K\|_2^2 + \sigma_{r}(Y^\sharp)^2 \|H\|_2^2 \ge \sigma_\sharp^2 d^2,
\end{equation*}
where
\(
\sigma_\sharp := \min\{\sigma_{r}(X^\sharp),\sigma_{r}(Y^\sharp)\}.
\)
It follows that
\begin{equation}\label{eq:Delta1-lower}
\|\Delta_1\|_1 \;\ge\; \|\Delta_1\|_2 \;\ge\; \sigma_\sharp\, d.
\end{equation}

Next we relate $\sigma_\sharp$ to $(X^{\star},Y^{\star})$. Define
\[
\sigma_{\star} := \min\{\sigma_{r}(X^{\star}),\sigma_{r}(Y^{\star})\}.
\]
As $(X^{\star},Y^{\star})\in\mathcal L$ and $k=r$, both $X^{\star}$ and $Y^{\star}$ have rank $r$, so $\sigma_{\star}>0$. The map
\[
\mathcal L\ni (X,Y)\ \longmapsto\ \min\{\sigma_{r}(X),\sigma_{r}(Y)\}
\]
is continuous, and the projection $\mathcal P_{\mathcal L}$ is continuous on $\mathcal N$ and coincides with the identity map on $\mathcal L$. Therefore, after possibly shrinking $\mathcal N$, we may assume
\[
\sigma_\sharp = \min\{\sigma_{r}(X^\sharp),\sigma_{r}(Y^\sharp)\}
 \ge \frac{\sigma_{\star}}{2}
\quad\text{for all }(X,Y)\in\mathcal N, (X^\sharp,Y^\sharp) = \mathcal P_{\mathcal L}(X,Y).
\]
Combining this with \Eqref{eq:Delta1-lower}, we obtain
\begin{equation}\label{eq:Delta1_d}
  \|\Delta_1\|_1 \;\ge\; \frac{\sigma_{\star}}{2}\, d
\quad\text{for all }(X,Y)\in\mathcal N.  
\end{equation}
By Cauchy--Schwarz inequality,
\begin{equation}\label{eq:Delta2_d}
 \|\Delta_2\|_1 = \|HK^\top\|_1 \le \sqrt{mn} \|H\|_2 \|K\|_2
 \le \frac{\sqrt{mn}}{2} \big(\|H\|_2^2+\|K\|_2^2\big)
 = \frac{\sqrt{mn}}{2} d^2.   
\end{equation}

Plugging \Eqref{eq:Delta1_d} and \Eqref{eq:Delta2_d} into \Eqref{eq:overall}, we have
\[
f(X,Y)-f(X^{\star},Y^{\star})
 \ge \frac{\epsilon \sigma_{\star}}{4}   d -  \frac{\sqrt{mn}}{2}  d^2 \ge  \frac{\epsilon \sigma_{\star}}{8}   d
\]
whenever $d\le \epsilon\sigma_{\star}/(4\sqrt{mn})$. This proves the theorem by taking $\rho\le \epsilon\sigma_{\star}/(4\sqrt{mn})$ such that $B((X^{\star},Y^{\star}),\rho)\subset \mathcal N$. 
\end{proof}

We next treat the case of overparametrization, namely, when the rank of $L$ is strictly less than the search rank $k$. Somewhat surprisingly, the solutions in $\mathcal L$ cease to be local minima even in the mildly overparameterized regime $k = r + 1$. This complements \cite[Theorem 6]{ma2025can}, which proves that they are not local minima when $k>2r$. Furthermore, by exhibiting directions of descent for any solution in $\mathcal L$, we show that these points are not second-order optimal in the sense of \cite[Theorem~13.24]{rockafellar2009variational}.

\begin{theorem}\label{thm:over}
Let $k>r\ge 1$, $(X^{\star},Y^{\star})\in \mathcal L$, and assume $S\neq 0$. Then there exist $(H,K)\in\mathbb{R}^{m\times k}\times \mathbb{R}^{n\times k}$ with
\(
\|H\|_2^2+\|K\|_2^2=1
\)
and $t_0>0$ such that
\[
f(X^{\star}+tH,Y^{\star}+tK)=f(X^{\star},Y^{\star})-\gamma\,t^2
\quad\forall t\in [0,t_0],
\]
where
\[
\gamma:=\frac12\|P_XP_Y\|_{\mathrm{op}}\;>0
\]
and $P_X,P_Y\in \mathbb R^{k\times k}$ are orthogonal projectors onto $\ker(X^{\star})$ and $\ker(Y^{\star})$, respectively.
\end{theorem}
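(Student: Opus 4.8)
The plan is to produce an explicit rank-one perturbation direction $(H,K)$, assembled from vectors in $\ker(X^\star)$ and $\ker(Y^\star)$ and aligned with a single corrupted entry, along which the product $XY^\top$ is unchanged to first order while the $\ell_1$ objective strictly decreases at exactly the claimed second-order rate.

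First I would record the structure coming from membership in $\mathcal L$: since $X^\star(Y^\star)^\top = L$ with $\mathrm{rank}(L)=r$ and $\mathrm{rank}(X^\star)=\mathrm{rank}(Y^\star)=r$, we get $\col(X^\star)=\col(U)$, $\col(Y^\star)=\col(V)$, and $\ker(X^\star),\ker(Y^\star)$ are $(k-r)$-dimensional subspaces of $\mathbb R^k$ with $k-r\ge 1$. The first substantive step is to show $\gamma=\tfrac12\|P_XP_Y\|_{\mathrm{op}}>0$, i.e. that $\ker(X^\star)$ and $\ker(Y^\star)$ are not mutually orthogonal. If they were, pick a nonzero $v\in\ker(X^\star)$; then $v\perp\ker(Y^\star)$, so $v\in\row(Y^\star)$, say $v=(Y^\star)^\top z$. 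Then $0=X^\star v=X^\star(Y^\star)^\top z=Lz$, so $V^\top z=0$ (as $U\Sigma$ has full column rank), hence $z\perp\col(Y^\star)$ and therefore $v=(Y^\star)^\top z=0$, a contradiction. Having established $\sigma:=\|P_XP_Y\|_{\mathrm{op}}>0$, I would use the standard identity for the operator norm of a product of orthogonal projectors to pick unit vectors $p\in\ker(X^\star)$, $q\in\ker(Y^\star)$ with $\langle p,q\rangle=\sigma$ — for instance, take $q$ to be a leading eigenvector of the PSD matrix $P_YP_XP_Y$, which necessarily lies in $\ker(Y^\star)$, and set $p:=P_Xq/\|P_Xq\|$.

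Next I would exhibit the direction. Since $S\neq 0$, fix $(i_0,j_0)\in\Omega$ and put $s_0:=\sign(S_{i_0j_0})\in\{\pm1\}$; set $H:=\tfrac1{\sqrt2}e_{i_0}q^\top$ and $K:=\tfrac{s_0}{\sqrt2}e_{j_0}p^\top$, so that $\|H\|_2^2+\|K\|_2^2=1$. With $X:=X^\star+tH$, $Y:=Y^\star+tK$, expand $XY^\top-M=t\bigl(X^\star K^\top+H(Y^\star)^\top\bigr)+t^2HK^\top-S$. Every row of $K$ is a multiple of $p\in\ker(X^\star)$, so $X^\star K^\top=0$; every row of $H$ is a multiple of $q\in\ker(Y^\star)$, so $H(Y^\star)^\top=0$; hence the first-order term vanishes. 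Moreover $HK^\top=\tfrac{s_0}{2}\langle q,p\rangle\,e_{i_0}e_{j_0}^\top=\tfrac{s_0\sigma}{2}E_{i_0j_0}$, so $XY^\top-M=\tfrac{s_0\sigma t^2}{2}E_{i_0j_0}-S$. This matrix coincides with $-S$ off the entry $(i_0,j_0)$, where it equals $s_0\bigl(\tfrac{\sigma t^2}{2}-|S_{i_0j_0}|\bigr)$; so whenever $\tfrac{\sigma t^2}{2}\le|S_{i_0j_0}|$, that is for $t\in[0,t_0]$ with $t_0:=\sqrt{2|S_{i_0j_0}|/\sigma}>0$, the $\ell_1$ norm equals $\|S\|_1-\tfrac{\sigma t^2}{2}$. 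Since $f(X^\star,Y^\star)=\|L-M\|_1=\|S\|_1$, this is precisely $f(X^\star,Y^\star)-\gamma t^2$ with $\gamma=\sigma/2=\tfrac12\|P_XP_Y\|_{\mathrm{op}}$.

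The hard part is the positivity of $\gamma$ — equivalently, realizing that the two kernels must overlap non-trivially in the sense $\|P_XP_Y\|_{\mathrm{op}}>0$, and guessing the perturbation; once the direction is fixed, the computation of $XY^\top-M$ and the entrywise $\ell_1$ evaluation are routine. The one technical point needing care is selecting $p,q$ with $\langle p,q\rangle$ equal to $\|P_XP_Y\|_{\mathrm{op}}$, not merely positive, since matching the stated constant $\gamma$ hinges on this exact value.
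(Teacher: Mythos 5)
Your proposal is correct and follows essentially the same route as the paper: a rank-one perturbation built from kernel directions of $X^{\star}$ and $Y^{\star}$, chosen so that the first-order term $X^{\star}K^{\top}+H(Y^{\star})^{\top}$ vanishes and the second-order term $HK^{\top}$ cancels a single corrupted entry at rate $\gamma t^2$, with the same $t_0=\sqrt{|S_{i_0j_0}|/\gamma}$. The only difference is presentational: you establish $\|P_XP_Y\|_{\mathrm{op}}>0$ and extract the extremal pair $(p,q)$ intrinsically (via the row-space argument and the leading eigenvector of $P_YP_XP_Y$), whereas the paper works in rotated coordinates with the orthogonal block matrix $R=U_Y^{\top}U_X$ and the singular vectors of $R_{22}=N_Y^{\top}N_X$ — these are equivalent, and both steps in your version check out.
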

When \( k > 2r \), we always have \( \norm{P_X P_Y}_{\mathrm{op}} = 1 \), which implies that \( \gamma = \frac{1}{2} \). Moreover, unlike the case for smooth functions, the subgradient method can still converge to a saddle point of nonsmooth objectives such as \Eqref{eq:obj}. In our experiments, the subgradient method converges to a true solution even when \( k > r \), as shown in Figure \ref{fig:error-over}. However, the achieved accuracy is not as good as in the case \( k = r \) (see Figure \ref{fig:error-exact}), which aligns with the lack of sharpness suggested by Theorem \ref{thm:over}.

\begin{proof}
Since $\rank(X^{\star})=\rank(Y^{\star})=r$, there exist two orthonormal matrices $U_X,U_Y\in\mathbb{R}^{k\times k}$ such that
\[
X^{\star}U_X=\big[X_1\ \ 0\big],\qquad Y^{\star}U_Y=\big[Y_1\ \ 0\big],
\]
where $X_1\in\mathbb{R}^{m\times r}$ and $Y_1\in\mathbb{R}^{n\times r}$ have full column rank. Define the matrix $R:=U_Y^\top U_X$, and partition it as 
\[
R=\begin{bmatrix}R_{11}&R_{12}\\ R_{21}&R_{22}\end{bmatrix}.
\]
Then, the product $X^{\star}Y^{\star\top}$ becomes
\[
L = X^{\star}Y^{\star\top}=[X_1\ 0]R^\top[Y_1\ 0]^\top=X_1 R_{11}^\top Y_1^\top.
\]
This shows that $R_{11}$ is invertible.

Next, we prove that $R_{22} \neq 0$. Suppose instead that $R_{22}=0$. From the condition $RR^\top=I$, we get
\[
R_{21}R_{21}^\top + R_{22}R_{22}^\top = I_{k-r}\ \Rightarrow\ R_{21}R_{21}^\top = I_{k-r}.
\]
This implies that $P:=R_{21}^\top R_{21}$ is a nonzero orthogonal projection matrix. From $R^\top R=I$, we also have
\[
R_{11}^\top R_{11} + R_{21}^\top R_{21} = I_r \ \Rightarrow\ R_{11}^\top R_{11} = I_r - P.
\]
Since $P\neq 0$, the matrix $I_r-P$ is singular, contradicting the invertibility of $R_{11}$. Therefore, $R_{22}\neq 0$.

Let $w\in\mathbb{R}^{k-r}$ with $\|w\|_2=1$ be a right singular vector of $R_{22}$ corresponding to the largest singular value
$\|R_{22}\|_{\mathrm{op}}$, and define
\[
z:=\frac{R_{22}w}{\|R_{22}w\|_2^2}.
\]
Now fix an index pair $(i,j)$ such that $S_{ij}\neq 0$. Work in reduced coordinates
$\tilde H:=HU_Y=[H_1\ H_2]$, $\tilde K:=KU_X=[K_1\ K_2]$ and choose
\[
H_1=0,\quad K_1=0,\qquad
H_2=\alpha\,\sign(S_{ij}) e_{i} z^\top,\qquad
K_2=\beta\,e_{j} w^\top,
\]
with $\alpha,\beta>0$ to be determined later. We map back to the original coordinates using $H=\tilde H U_Y^\top$, $K=\tilde K U_X^\top$. A standard block computation gives
\[
H(Y^{\star})^\top+X^{\star}K^\top
=\tilde{H} [Y_1^\top~0] + [X_1~0]\tilde{K}^\top
= H_1Y_1^\top + X_1 K_1^\top = 0,
\]
and using $HK^\top=\tilde H R \tilde K^\top$ with $H_1=K_1=0$,
\[
HK^\top = H_2 R_{22} K_2^\top
= \alpha\beta\, \sign(S_{ij}) e_{i} (z^\top R_{22} w) e_{j}^\top
= \alpha\beta\, \sign(S_{ij}) e_{i}e_{j}^\top.
\]
Since $\|R_{22}w\|_2=\|R_{22}\|_{\mathrm{op}}$ and $\|z\|_2=1/\|R_{22}w\|_2$,
we normalize by choosing
\[
\alpha=\frac{1}{\sqrt{2}\,\|z\|_2},\qquad \beta=\frac{1}{\sqrt{2}},
\]
so that $\|H\|_2^2+\|K\|_2^2=1$ and
\[
\alpha\beta=\frac{1}{2\|z\|_2}=\frac12\|R_{22}\|_{\mathrm{op}}.
\]
Moreover, with $N_X:=U_X\!\begin{bmatrix}0\\ I_{k-r}\end{bmatrix}$ and
$N_Y:=U_Y\!\begin{bmatrix}0\\ I_{k-r}\end{bmatrix}$, we have orthonormal bases of
$\ker(X^\star)$ and $\ker(Y^\star)$ and
\[
R_{22}=N_Y^\top N_X,\qquad P_X=N_XN_X^\top,\qquad P_Y=N_YN_Y^\top,
\]
hence
\[
\|R_{22}\|_{\mathrm{op}}=\|N_Y^\top N_X\|_{\mathrm{op}}=\|P_XP_Y\|_{\mathrm{op}}.
\]
Therefore $\alpha\beta=\frac12\|P_XP_Y\|_{\mathrm{op}}=\gamma$.

Finally, for $t\in[0,t_0]$ with $t_0:=\sqrt{|S_{ij}|/\gamma}$,
\begin{align*}
f(X^{\star}+tH, Y^{\star}+tK)
&= \|t(H(Y^{\star})^\top+X^{\star}K^\top) + t^2 HK^\top - S\|_1\\
&= \|t^2 \gamma\, \sign(S_{ij}) e_{i}e_{j}^\top - S\|_1\\
&= \|S\|_1 - \gamma t^2
= f(X^{\star},Y^{\star}) - \gamma t^2,
\end{align*}
as claimed.
\end{proof}

\section{Conclusion and future directions}\label{sec:conclusion}

In this work, we studied the nonsmooth and nonconvex factorized robust PCA objective
\(
f(X,Y)=\|XY^\top-M\|_1
\)
under the standard low-rank plus sparse model \(M=L+S\), where \(L\) is incoherent and \(S\) has random sparse support.
Our main result certifies the \emph{optimality structure at the ground truth}: with high probability, every true rank-\(r\)
factorization \((X^\star,Y^\star)\in\mathcal L\) is a Clarke critical point.
We further characterized the local geometry. In the exactly-parameterized regime \(k=r\), the true solution set forms sharp
local minima, which implies local linear convergence of the subgradient method under standard step-size schedules.
In contrast, in the overparameterized regime \(k>r\), the same factorizations become strict saddle points.
A key technical ingredient is a restricted \(\ell_1\) operator-norm bound for \(\mathcal P_\Omega\) on the subspace
\(T\), obtained via a mixed-tail empirical process argument and generic chaining.
Below we highlight two directions for future work.

\begin{itemize}
    \item \emph{Extension to robust matrix completion.}
    Robust PCA corresponds to full observation of \(M\). In many applications, only a subset of entries is observed, leading to
    robust matrix completion.
    Prior work \cite[Theorem~6]{ma2025can} shows that true factorizations admit second-order descent directions in certain
    overparameterized regimes, but does not provide a first-order optimality certification.
    We believe the approach developed here—reducing first-order certification to controlling a restricted operator norm and then
    bounding an associated empirical process—can be adapted to this setting and help resolve this gap.
    Establishing such results would extend existing theoretical guarantees for nonsmooth factorized formulations beyond the fully observed setting.

    \item \emph{Are strict saddle points in the overparameterized regime active or inactive?}
    When \(k>r\), Theorem~\ref{thm:over} shows that true factorizations are strict saddle points: there exist directions along which the
    objective decreases quadratically. Yet in our experiments, subgradient methods with small random initialization often converge
    to the ground truth even when \(k>r\) (Figure~\ref{fig:error-over}).
    This suggests that these strict saddle points may be \emph{inactive} in the sense of
    \cite{davis2019active}, consistent with the conjecture in \cite{ma2025can}.
    Establishing this conjecture would help clarify the empirical behavior of first-order methods in the overparameterized regime.
\end{itemize}

\appendix
\section{Auxiliary lemmas}\label{sec:auxiliary}
\begin{lemma}[Bernstein's inequality, adapted from \cite{boucheron2003concentration}]
\label{lem::bernstein}
Let $X_1, \cdots, X_n$ be independent real-valued random variables satisfying $\bE [X_i]=0$ and $|X_i|\leq c$ for all $1\leq i\leq n$ and let $v=\sum_{i=1}^{n}\Var(X_i)$.
Then for all $u > 0$,
\begin{equation*}
    \mathbb{P}\left(\left|\sum_{i=1}^{n}X_i\right| \ge \sqrt{2vu}+\frac{c}{3}u\right) \le  2e^{-u}.
\end{equation*}
\end{lemma}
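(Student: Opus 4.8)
The plan is to prove this by the classical Cramér--Chernoff (moment-generating-function) method, following \cite{boucheron2003concentration}. First I would reduce to a one-sided bound: since $-X_1,\dots,-X_n$ satisfy the same hypotheses, $\bP\bigl(|\sum_{i}X_i|\ge x\bigr)\le\bP\bigl(\sum_{i}X_i\ge x\bigr)+\bP\bigl(\sum_{i}(-X_i)\ge x\bigr)$, so it suffices to show $\bP\bigl(\sum_i X_i\ge\sqrt{2vu}+\tfrac{c}{3}u\bigr)\le e^{-u}$; the factor $2$ in the statement is exactly this union bound.

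The core estimate is a per-summand bound on the log-MGF: writing $\sigma_i^2:=\Var(X_i)$, for every $0<\lambda<3/c$,
\[
\log\bE\!\left[e^{\lambda X_i}\right]\ \le\ \frac{\lambda^{2}\sigma_i^{2}/2}{1-\lambda c/3}.
\]
I would obtain this by expanding $e^{\lambda X_i}=1+\lambda X_i+\sum_{q\ge2}\lambda^{q}X_i^{q}/q!$, taking expectations (using $\bE X_i=0$ and $\bE[X_i^{q}]\le\bE|X_i|^{q}\le c^{q-2}\sigma_i^{2}$ for $q\ge2$), summing the tail series to $\tfrac{\sigma_i^{2}}{c^{2}}\bigl(e^{\lambda c}-1-\lambda c\bigr)$, and applying the elementary inequality $e^{t}-1-t\le\tfrac{t^{2}/2}{1-t/3}$ for $0\le t<3$ (which reduces to the coefficient comparison $q!\ge 2\cdot 3^{q-2}$), together with $1+x\le e^{x}$. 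By independence these tensorize to $\log\bE[e^{\lambda\sum_i X_i}]\le\tfrac{\lambda^{2}v/2}{1-\lambda c/3}$ on $0<\lambda<3/c$, with $v=\sum_i\sigma_i^{2}$ as in the statement.

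Finally I would run the Chernoff bound: $\bP\bigl(\sum_i X_i\ge t\bigr)\le\exp\bigl(-\sup_{0<\lambda<3/c}\bigl[\lambda t-\tfrac{\lambda^{2}v/2}{1-\lambda c/3}\bigr]\bigr)$. The supremum here is the Legendre transform of $\lambda\mapsto\lambda^{2}v/\bigl(2(1-\lambda c/3)\bigr)$, which equals $\tfrac{9v}{c^{2}}\,h\!\bigl(\tfrac{ct}{3v}\bigr)$ with the Bernstein function $h(x):=1+x-\sqrt{1+2x}$; since $h$ is increasing on $[0,\infty)$ with inverse $h^{-1}(y)=y+\sqrt{2y}$, this transform exceeds $u$ precisely when $t\ge\tfrac{c}{3}u+\sqrt{2vu}$. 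Plugging in $t=\sqrt{2vu}+\tfrac{c}{3}u$ gives the one-sided bound, and combining with the first step gives the lemma.

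The step I expect to require the most care is this last optimization: producing the sharp constant $\tfrac{c}{3}$ (rather than $\tfrac{2c}{3}$, which the naive choice $\lambda=t/(v+ct/3)$ in the Chernoff bound would yield) requires the exact Legendre-transform identity above, i.e.\ recognizing and inverting $h$. An equivalent route that sidesteps the explicit optimization is to check the factorial moment condition $\sum_i\bE\bigl[(X_i)_+^{q}\bigr]\le\tfrac{q!}{2}\,v\,(c/3)^{q-2}$ for all integers $q\ge3$ --- immediate from $(X_i)_+^{q}\le c^{q-2}X_i^{2}$ and $3^{q-2}\le q!/2$ --- and then invoke the standard form of Bernstein's inequality under that condition directly from \cite{boucheron2003concentration}.
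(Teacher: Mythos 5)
The paper states this lemma without proof, simply citing \cite{boucheron2003concentration}, and your Cram\'er--Chernoff derivation is precisely the standard argument given there (the sub-gamma log-MGF bound $\log\bE[e^{\lambda X_i}]\le \lambda^2\sigma_i^2/(2(1-\lambda c/3))$ followed by the Legendre-transform inversion $h^{-1}(y)=y+\sqrt{2y}$). Your proof is correct, including the coefficient comparison $q!\ge 2\cdot 3^{q-2}$ and the final optimization yielding the constant $c/3$, so there is nothing to reconcile with the paper.
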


\begin{definition}[The $\gamma_\alpha$-functional]
A sequence $\mathcal{T} = (T_n)_{n \ge 0}$ of subsets of $T$ is called \emph{admissible} if 
\begin{equation*}
    |T_0| = 1 \quad \text{and} \quad |T_n| \le 2^{2^n} \ \text{for all } n \ge 1.
\end{equation*}
For any $0 < \alpha < \infty$, the \emph{$\gamma_\alpha$-functional} of $(T,d)$ is defined by
\begin{equation*}
    \gamma_\alpha(T, d)
= \inf_{\mathcal{T}} \sup_{t \in T} \sum_{n=0}^{\infty} 2^{n/\alpha} d(t, T_n),
\end{equation*}
where the infimum is taken over all admissible sequences $\mathcal{T}$.
\end{definition}

\begin{definition}[Mixed Subgaussian–Subexponential Increments]
\label{def::subgaussian-subexponential}
Let $d_1, d_2$ be two semi-metrics on a set $T$.  
A stochastic process $(X_t)_{t \in T}$ is said to have \emph{mixed subgaussian–subexponential increments}, or simply to \emph{have a mixed tail}, with respect to the pair $(d_1, d_2)$ if for all $s, t \in T$ and all $u \ge 0$,
\begin{equation*}
    \mathbb{P}\!\left( |X_t - X_s| \ge \sqrt{u}  d_2(t,s) + u  d_1(t,s) \right) \le 2 e^{-u}.
\end{equation*}
This means that the first part of the tail behaves as that of a subgaussian random variable, while the second part behaves as that of a subexponential random variable.
\end{definition}

\begin{theorem}[Adapted from \protect{\cite[Theorem~3.5]{dirksen2015tail}}]
\label{thm:mixed-tail}
If $(X_t)_{t \in T}$ has a mixed tail with respect to $(d_1, d_2)$, then there exists a universal constant $C > 0$ such that for any fixed $t_0 \in T$,
\begin{equation*}
    \bE \left[\sup_{t \in T} |X_t - X_{t_0}|\right]
\le C \Bigl( \gamma_2(T, d_2) + \gamma_1(T, d_1) \Bigr)
+ 2  \sup_{t \in T} \bE \left[|X_t - X_{t_0}|\right].
\end{equation*}
As a consequence, there are constants $c,C >0$ such that for any $u\geq 1$,
\begin{equation*}
\mathbb{P}\Big(\sup _{t \in T}\left\|X_t-X_{t_0}\right\| \geq C\left(\gamma_2\left(T, d_2\right)+\gamma_1\left(T, d_1\right)\right)+c\left(\sqrt{u} \Delta_{d_2}(T)+u \Delta_{d_1}(T)\right)\!\Big) \leq e^{-u}.
\end{equation*}
Here $\Delta_d(T)=\sup_{s, t\in T}d(s, t)$ is the diameter of $T$ under metric $d$.
\end{theorem}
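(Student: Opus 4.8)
The result is the two-metric generic-chaining bound of Dirksen \cite{dirksen2015tail}, built on Talagrand's machinery \cite{talagrand2014upper}, so the plan is to reproduce that argument. Throughout I assume $(X_t)_{t\in T}$ is separable (otherwise one runs everything over arbitrary finite subsets of $T$ and passes to the limit), so the supremum is a genuine random variable. First I would fix admissible sequences $(A_n)_{n\ge 0}$ and $(B_n)_{n\ge 0}$ that are within a factor $2$ of optimal for $\gamma_2(T,d_2)$ and for $\gamma_1(T,d_1)$ respectively, and merge them into one admissible sequence by setting $T_0:=\{t_0\}$ and $T_n:=A_{n-1}\cup B_{n-1}$ for $n\ge 1$; since $|A_{n-1}\cup B_{n-1}|\le 2\cdot 2^{2^{n-1}}\le 2^{2^{n}}$ this is admissible, and because $d_2(t,T_n)\le d_2(t,A_{n-1})$ and $d_1(t,T_n)\le d_1(t,B_{n-1})$ the shift by one index costs only constants, giving
\[
\sup_{t\in T}\sum_{n\ge 0}2^{n/2}d_2(t,T_n)\lesssim \gamma_2(T,d_2),\qquad \sup_{t\in T}\sum_{n\ge 0}2^{n}d_1(t,T_n)\lesssim \gamma_1(T,d_1).
\]
Along this sequence I want a chaining map $\pi_n:T\to T_n$ with $\pi_0\equiv t_0$ such that, simultaneously, $\sum_n 2^{n/2}d_2(\pi_n(t),\pi_{n-1}(t))\lesssim\gamma_2(T,d_2)$ and $\sum_n 2^{n}d_1(\pi_n(t),\pi_{n-1}(t))\lesssim\gamma_1(T,d_1)$ for every $t$. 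Producing a \emph{single} chain efficient for both weight/metric pairs at once is the technical crux; it is exactly where care is needed, and it is handled by the joint partition/growth-functional construction of Talagrand (one refines the partitions attached to $(A_n)$ and $(B_n)$ simultaneously, controlling $2^{n/2}(d_2\text{-diam})+2^{n}(d_1\text{-diam})$ of each cell), which I would invoke rather than reconstruct.

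Next I would telescope $X_t-X_{t_0}=\sum_{n\ge 1}\bigl(X_{\pi_n(t)}-X_{\pi_{n-1}(t)}\bigr)$, legitimate by separability since $d_2(t,\pi_n(t)),d_1(t,\pi_n(t))\to 0$. At level $n$ the pair $(\pi_{n-1}(t),\pi_n(t))$ takes at most $|T_{n-1}|\cdot|T_n|\le 2^{2^{n+1}}$ values. Applying the mixed-tail increment hypothesis to each link with the level-dependent parameter $u_n:=u+\kappa\,2^{n}$, where $\kappa$ is a universal constant chosen so that $\sum_n 2^{2^{n+1}}e^{-u_n}\lesssim e^{-u}$, and taking a union bound, one gets an event of probability at least $1-e^{-u}$ on which, for all $t\in T$ and all $n\ge 1$,
\[
\bigl|X_{\pi_n(t)}-X_{\pi_{n-1}(t)}\bigr|\le \sqrt{u_n}\,d_2(\pi_n(t),\pi_{n-1}(t))+u_n\,d_1(\pi_n(t),\pi_{n-1}(t)).
\]
On this event I would use $\sqrt{u_n}\le\sqrt{\kappa}\,2^{n/2}+\sqrt{u}$ and $u_n\le\kappa\,2^{n}+u$ to split each bound into a geometric part and a $u$-dependent part. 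Summing the geometric parts over $n$ and invoking the two displayed chain estimates gives a contribution $\lesssim\gamma_2(T,d_2)+\gamma_1(T,d_1)$; summing the $u$-dependent parts, using $d_2(\pi_n(t),\pi_{n-1}(t))\le 2\Delta_{d_2}(T)$, $d_1(\pi_n(t),\pi_{n-1}(t))\le 2\Delta_{d_1}(T)$ and the decay $d_2(t,\pi_n(t))\lesssim 2^{-n/2}\gamma_2(T,d_2)$, $d_1(t,\pi_n(t))\lesssim 2^{-n}\gamma_1(T,d_1)$ (so only boundedly many links are of true diameter size), gives $\lesssim\sqrt{u}\,\Delta_{d_2}(T)+u\,\Delta_{d_1}(T)$. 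This is precisely the stated high-probability bound.

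For the expectation inequality I would integrate the tail bound over $u\ge 1$: since $\gamma_\alpha(T,d)\ge\sup_t d(t,T_0)\ge\tfrac12\Delta_d(T)$, the diameter terms are dominated by the $\gamma$-functionals, so $\bE[\sup_t|X_t-X_{t_0}|]\lesssim\gamma_2(T,d_2)+\gamma_1(T,d_1)$, which already implies the claimed estimate. To recover the stated form with the sharp additive remainder $2\sup_t\bE|X_t-X_{t_0}|$ (useful when this quantity is much smaller than the diameters, as in Step 5 of \Cref{thm::first-order-optimality}), I would instead run the expectation version of the chaining on the centered process $X_t-\bE X_t$ — whose increments still satisfy the mixed-tail condition up to a constant loss, since $|\bE(X_t-X_s)|\le\bE|X_t-X_s|\lesssim d_2(t,s)+d_1(t,s)$ — obtaining $\bE\sup_t|(X_t-\bE X_t)-(X_{t_0}-\bE X_{t_0})|\le C(\gamma_2(T,d_2)+\gamma_1(T,d_1))$, and then add back $\sup_t|\bE X_t-\bE X_{t_0}|\le\sup_t\bE|X_t-X_{t_0}|$. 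The main obstacle of the whole proof remains the simultaneous two-metric chain construction flagged in the first paragraph; the rest — the choice of $u_n$, the union bound, the geometric/diameter split, and the $\Delta$-versus-$\gamma$ comparison — is routine generic-chaining bookkeeping.
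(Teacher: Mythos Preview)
The paper does not prove \Cref{thm:mixed-tail}: it appears in Appendix~\ref{sec:auxiliary} as an auxiliary result quoted from \cite[Theorem~3.5]{dirksen2015tail} without argument. Your sketch is a faithful outline of Dirksen's proof and correctly isolates the one genuinely delicate step --- building a \emph{single} chain $\pi_n$ that is simultaneously efficient for both metric/weight pairs. Your union $T_n=A_{n-1}\cup B_{n-1}$ does give an admissible sequence with small $\gamma$-functionals in both metrics, but it does not by itself produce such a $\pi_n$ (a point of $T_n$ nearest to $t$ in $d_2$ need not be near in $d_1$); the resolution, as you indicate, is to pass to the common refinement of the underlying \emph{partitions} and take one representative per cell, which is exactly how Dirksen proceeds. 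The remaining bookkeeping --- the level parameter $u_n=u+\kappa\,2^n$, the union bound over $\le 2^{2^{n+1}}$ links, the geometric/diameter split, and the integration step using $\Delta_d(T)\le 2\gamma_\alpha(T,d)$ --- is standard and correctly laid out.
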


\begin{lemma}[Adapted from \protect{\cite[Equation~2.3]{dirksen2015tail}}]
\label{lem::dudley}
    For any $\varepsilon>0$ let $N(T,d,\varepsilon)$ denote the covering number of $T$, i.e., the smallest
number of balls of radius $\varepsilon$ in $(T,d)$ needed to cover $T$. We have
\begin{equation*}
    \gamma_{\alpha}(T,d)\leq C_\alpha \int_0^{\infty}\left(\log\left(N(T,d,\varepsilon)\right)\right)^{1/\alpha}\operatorname{d}\!\varepsilon,
\end{equation*}
where $C_\alpha>0$ is a constant that only depends on $\alpha$.
\end{lemma}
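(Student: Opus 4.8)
The plan is to run Talagrand's standard chaining construction: build an admissible sequence of subsets of $T$ out of near-minimal covering sets, bound $\gamma_\alpha(T,d)$ by the resulting discrete sum of entropy numbers, and then compare that sum with the entropy integral by an Abel-summation (layer-cake) argument. I would first dispose of the trivial case: if the right-hand side is infinite there is nothing to prove, so I may assume $T$ is totally bounded and $\mathrm{diam}(T)<\infty$.

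For the construction, I would introduce the entropy numbers $e_n:=\inf\{\varepsilon>0:\ N(T,d,\varepsilon)\le 2^{2^n}\}$ for $n\ge 1$ (these are nonincreasing and tend to $0$ by total boundedness) and set $e_0:=\mathrm{diam}(T)$. Since $N(T,d,\cdot)$ is nonincreasing, for every $\varepsilon>e_n$ there is an $\varepsilon$-net of cardinality at most $2^{2^n}$; replacing each center by a point of $T$ lying in the corresponding ball costs only a factor $2$ in the radius while preserving the cardinality, so I can choose $T_n\subseteq T$ with $|T_n|\le 2^{2^n}$ and $\sup_{t\in T}d(t,T_n)\le 4e_n$. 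Taking $T_0$ to be any single point of $T$, the sequence $(T_n)_{n\ge 0}$ is admissible, so by definition of $\gamma_\alpha$,
\[
\gamma_\alpha(T,d)\ \le\ \sup_{t\in T}\sum_{n\ge 0}2^{n/\alpha}d(t,T_n)\ \le\ e_0+4\sum_{n\ge 1}2^{n/\alpha}e_n .
\]

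The substantive step is to relate $\sum_{n}2^{n/\alpha}e_n$ to $\int_0^\infty\bigl(\log N(T,d,\varepsilon)\bigr)^{1/\alpha}\operatorname{d}\!\varepsilon$. Here I would use that for $\varepsilon<e_n$ one has $N(T,d,\varepsilon)>2^{2^n}$, hence the integrand is at least $(\log 2)^{1/\alpha}2^{n/\alpha}$ on the layer $[e_{n+1},e_n)$; since these layers tile $[0,e_0)$, summing over $n$ gives $\int_0^\infty(\log N)^{1/\alpha}\operatorname{d}\!\varepsilon\ \gtrsim\ \sum_{n\ge 0}2^{n/\alpha}(e_n-e_{n+1})$, and an Abel summation rewrites the right-hand side as $e_0+(1-2^{-1/\alpha})\sum_{n\ge 1}2^{n/\alpha}e_n$. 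Combining this with the displayed bound on $\gamma_\alpha$, together with the elementary observation that $\mathrm{diam}(T)$ is itself $\lesssim$ the integral (because $N(T,d,\varepsilon)\ge 2$ whenever $\varepsilon<\mathrm{diam}(T)/2$, forcing the integrand to be $\ge(\log 2)^{1/\alpha}$ on an interval of length $\mathrm{diam}(T)/2$), yields the claim with a constant $C_\alpha$ depending on $\alpha$ only through $(\log 2)^{-1/\alpha}$ and $(1-2^{-1/\alpha})^{-1}$.

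I do not expect a genuine obstacle here: the result is classical, and its content lies entirely in the layer-cake/Abel comparison of the previous paragraph. The only points requiring care are bookkeeping: arranging the covering sets to be centered inside $T$ (the factor-$2$ translation trick), handling the $n=0$ term through the diameter, dealing with the degenerate cases $\mathrm{diam}(T)=\infty$ (trivial) and $e_n=e_{n+1}$ (a zero-length layer), and being consistent about strict versus non-strict inequalities in the definition of $e_n$ and in the decomposition into layers.
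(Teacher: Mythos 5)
Your proposal is correct. The paper does not prove this lemma at all --- it is stated in the appendix and delegated to the cited reference (Dirksen, Eq.~2.3) --- and the argument you give is exactly the standard one underlying that citation: build an admissible sequence from near-minimal nets at the entropy-number scales $e_n$, bound $\gamma_\alpha$ by $e_0+4\sum_{n\ge1}2^{n/\alpha}e_n$, and compare with the entropy integral by the layer-cake/Abel-summation identity $\sum_n 2^{n/\alpha}(e_n-e_{n+1})=e_0+(1-2^{-1/\alpha})\sum_{n\ge1}2^{n/\alpha}e_n$, plus the observation that $\operatorname{diam}(T)$ is itself controlled by the integral. The only bookkeeping point worth flagging is your passage from ``a net of radius $2\varepsilon$ for every $\varepsilon>e_n$'' to the fixed bound $4e_n$, which implicitly takes $\varepsilon=2e_n$ and so needs the degenerate case $e_n=0$ handled separately (e.g.\ by taking $\varepsilon$ arbitrarily small, so that level contributes nothing); you already note this caveat yourself, and it does not affect the conclusion.
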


\begin{lemma}[Maximal inequality]
\label{lem::maximal-ineq}
    Suppose $X_1, \cdots, X_n$ are i.i.d. sub-Gaussian random variables with variance proxy $\sigma^2$, then
    \begin{equation*}
        \bE\left[\max_{1\leq i\leq n}X_i\right]\leq \sqrt{2\sigma^2 \log(n)}.
    \end{equation*}
\end{lemma}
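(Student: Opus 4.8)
The plan is to run the classical exponential-moment (Chernoff/Jensen) argument for the maximum of sub-Gaussian variables, optimizing over a free parameter at the end. Throughout I use the standard convention that ``$X_i$ is sub-Gaussian with variance proxy $\sigma^2$'' means $X_i$ is centered and satisfies the moment generating function bound $\bE[\exp(\lambda X_i)] \le \exp(\lambda^2 \sigma^2 / 2)$ for every $\lambda \in \R$; in particular $\bE[X_i] = 0$, so the $n=1$ case is trivial and we may assume $n \ge 2$.

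First I would fix an arbitrary $\lambda > 0$ and apply Jensen's inequality to the convex function $t \mapsto \exp(\lambda t)$, giving
\begin{equation*}
\exp\!\Big(\lambda\, \bE\big[\textstyle\max_{1\le i\le n} X_i\big]\Big) \;\le\; \bE\!\Big[\exp\!\big(\lambda \max_{1\le i\le n} X_i\big)\Big] \;=\; \bE\!\Big[\max_{1\le i\le n} \exp(\lambda X_i)\Big] \;\le\; \sum_{i=1}^n \bE\big[\exp(\lambda X_i)\big].
\end{equation*}
Next I would invoke the sub-Gaussian MGF bound term by term to get $\sum_{i=1}^n \bE[\exp(\lambda X_i)] \le n \exp(\lambda^2 \sigma^2 / 2)$. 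Taking logarithms and dividing by $\lambda > 0$ yields
\begin{equation*}
\bE\big[\textstyle\max_{1\le i\le n} X_i\big] \;\le\; \frac{\log n}{\lambda} + \frac{\lambda \sigma^2}{2}.
\end{equation*}

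Finally I would optimize the right-hand side over $\lambda > 0$: the minimizer is $\lambda^\star = \sqrt{2\log n / \sigma^2}$ (here $\log n > 0$ since $n \ge 2$, and $\sigma^2 > 0$ may be assumed, the $\sigma = 0$ case being trivial), and substituting gives $\bE[\max_i X_i] \le \sqrt{2\sigma^2 \log n}$, as claimed. Since the i.i.d.\ hypothesis is not actually needed beyond each $X_i$ being sub-Gaussian with the stated variance proxy, no further assumptions enter.

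There is essentially no serious obstacle here — the argument is three lines of standard manipulations. The only points that deserve a sentence of care are: (i) the sub-Gaussian convention being used (centered, with the one-sided-in-$\lambda$-sign MGF bound sufficing since we only need $\lambda > 0$), and (ii) the degenerate cases $n = 1$ or $\sigma = 0$, where both sides vanish. If one instead wanted the two-sided bound $\bE[\max_i |X_i|] \le \sqrt{2\sigma^2 \log(2n)}$, the same proof applies after replacing the collection $\{X_i\}$ by $\{X_i, -X_i\}$, but that is not what is stated, so I would not pursue it.
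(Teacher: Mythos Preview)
Your argument is correct and is exactly the standard Jensen/Chernoff proof of the sub-Gaussian maximal inequality. The paper itself states this lemma as an auxiliary result without proof, so there is nothing to compare against; your write-up would serve perfectly well as the omitted justification.
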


\begin{lemma}[\protect{Talagrand’s majorizing measure theorem \cite[Theorem 8.5.5]{vershynin2018high}}]
\label{lem::majorizing-measure}
    Let $(X_t)_{t\in T}$ be a mean-zero
Gaussian process on a set $T$. Consider the canonical metric defined on $T$ defined by
$d(t, s) = \sqrt{\bE[(X_t-X_s)^2]}$. Then, there exists a universal constant $C>0$ such that
\begin{equation*}
     \gamma_2(T , d)\leq C\bE\left[\sup_{t\in T}
X_t\right].
\end{equation*}
\end{lemma}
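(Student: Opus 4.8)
The stated bound $\gamma_2(T,d)\le C\,\bE\bigl[\sup_{t\in T}X_t\bigr]$ is the nontrivial direction of Talagrand's majorizing measure theorem (a lower bound on the Gaussian supremum in terms of $\gamma_2$); the reverse inequality $\bE[\sup_t X_t]\lesssim\gamma_2(T,d)$ is the elementary generic-chaining estimate and is not what is needed here. The plan is the classical ``growth functional plus partitioning'' route. Define the set functional $F(A):=\bE\bigl[\sup_{t\in A}X_t\bigr]$ for finite $A\subseteq T$; it is nonnegative (a supremum of mean-zero variables), monotone under inclusion, and satisfies $F(\{t\})=0$. The general case reduces to finite $T$ with a uniform constant, since $\bE[\sup_{t\in T}X_t]=\sup\{F(A):A\subseteq T\text{ finite}\}$ and $\gamma_2$ is likewise controlled by its finite subsets up to universal factors. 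The two Gaussian inputs are Sudakov minoration (if $t_1,\dots,t_N$ are pairwise $a$-separated in $d$ then $\bE[\max_i X_{t_i}]\ge\kappa\,a\sqrt{\log N}$, obtained from Slepian's comparison inequality) and Gaussian concentration (Borell--TIS: $\sup_{t\in H}(X_t-X_{t_0})$ is sub-Gaussian about its mean at scale $\sup_{t\in H}d(t,t_0)$).

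The crux is verifying a growth condition: there exist universal $c>0$ and $r\ge 4$ such that whenever $n\ge 1$, $a>0$, the points $t_1,\dots,t_N\in T$ with $N:=2^{2^n}$ are pairwise $a$-separated, and $H_1,\dots,H_N$ are finite sets with $t_\ell\in H_\ell\subseteq B(t_\ell,a/r)$, then
\[
F\Bigl(\bigcup_{\ell=1}^{N}H_\ell\Bigr)\;\ge\;c\,a\,2^{n/2}\;+\;\min_{1\le\ell\le N}F(H_\ell).
\]
To prove this, set $Z_\ell:=\sup_{t\in H_\ell}(X_t-X_{t_\ell})$, so that $\bE Z_\ell=F(H_\ell)$ (using $\bE X_{t_\ell}=0$) and, by Borell--TIS, $Z_\ell$ is sub-Gaussian about $\bE Z_\ell$ at scale $\sigma_\ell:=\sup_{t\in H_\ell}d(t,t_\ell)\le a/r$. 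Write $c^\star:=\min_\ell F(H_\ell)\ge0$ and $\zeta_\ell:=\min(Z_\ell,c^\star)-c^\star\le 0$; then
\[
\sup_{t\in\bigcup_\ell H_\ell}X_t\;\ge\;\max_\ell\bigl(X_{t_\ell}+Z_\ell\bigr)\;\ge\;\max_\ell\bigl(X_{t_\ell}+\zeta_\ell\bigr)+c^\star\;\ge\;\max_\ell X_{t_\ell}+\min_\ell\zeta_\ell+c^\star.
\]
Taking expectations, Sudakov gives $\bE[\max_\ell X_{t_\ell}]\ge\kappa\,a\sqrt{\log N}$, while $-\zeta_\ell=(c^\star-Z_\ell)_+\le(\bE Z_\ell-Z_\ell)_+$, so the sub-Gaussian maximal inequality gives $\bE[-\min_\ell\zeta_\ell]=\bE[\max_\ell(-\zeta_\ell)]\lesssim\sigma\sqrt{\log N}\le(a/r)\sqrt{\log N}$. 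Choosing $r$ large enough that this last term is at most $\tfrac{\kappa}{2}a\sqrt{\log N}$, and using $\sqrt{\log N}=\sqrt{\log 2}\,\cdot 2^{n/2}$, yields the displayed growth inequality (the finitely many small $n$ are absorbed into the constants).

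Given the growth condition together with monotonicity and $F(\{t\})=0$, the proof concludes by invoking the abstract partitioning theorem of generic chaining, in the form underlying \cite[Theorem~8.5.5]{vershynin2018high}: one inductively builds an admissible sequence $(\mathcal{A}_n)_{n\ge0}$ of partitions of $T$ such that for every $t\in T$,
\[
\sum_{n\ge0}2^{n/2}\operatorname{diam}\bigl(A_n(t)\bigr)\;\le\;L\,F(T),
\]
where $A_n(t)$ denotes the cell of $\mathcal{A}_n$ containing $t$ and $L$ is universal; at each level the growth condition is exactly what lets one either split a current cell into at most $2^{2^n}$ subcells of geometrically smaller diameter or else charge its diameter against a genuine decrease of $F$. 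By the definition of $\gamma_2$ this gives $\gamma_2(T,d)\le L\,F(T)=L\,\bE[\sup_{t\in T}X_t]$, which is the claim.

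The main obstacle is the growth condition, and in particular the ``gluing'' step, which must simultaneously harvest the Sudakov spread of the centers $X_{t_\ell}$ and the within-piece complexities $F(H_\ell)$ even though these are correlated; the truncation of $Z_\ell$ at $c^\star$ together with Gaussian concentration is what decouples them, but the quantitative bookkeeping — choosing $r$ so that the scale bound $\sigma_\ell\le a/r$ dominates the entropy factor $\sqrt{\log N}$, and checking that all constants close so the partitioning theorem applies — is the delicate part. The remaining ingredients (Sudakov minoration via Slepian, Borell--TIS concentration, the sub-Gaussian maximal inequality, and the generic-chaining partitioning machinery itself) are classical and can be cited.
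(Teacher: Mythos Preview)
The paper does not prove this lemma at all; it is listed in Appendix~\ref{sec:auxiliary} as an auxiliary result quoted directly from \cite[Theorem~8.5.5]{vershynin2018high}, with no argument given. Your proposal therefore goes well beyond what the paper does: you sketch the classical growth-functional-plus-partitioning proof of the hard direction of the majorizing measure theorem, combining Sudakov minoration, Borell--TIS concentration, and the generic-chaining partitioning scheme. The sketch is correct in outline and is exactly the route taken in the cited reference, so there is no discrepancy in approach---only in scope, since the paper simply cites the result rather than reproving it.
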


\bibliographystyle{alpha}
\bibliography{mybib}
\end{document}